\newtheorem{theorem}{Theorem}[section]
\newtheorem{lemma}{Lemma}[section]
\newtheorem{corollary}{Corollary}[section]
\newtheorem{example}{Example}[section]
\newcommand{\bfp}{\mathbf{p}}
\newcommand{\bfq}{\mathbf{q}}
\newcommand{\bfW}{\mathbf{W}}
\newcommand{\bfw}{\mathbf{w}}
\newcommand{\dive}{\mathrm{div}}
\begin{document}
\title{Computable Error Estimates for Ground State Solution of
Bose-Einstein Condensates
\footnote{This work is supported in part National Science Foundations of China
(NSFC 91330202, 11371026, 11001259, 11031006, 2011CB309703)
and the National Center for Mathematics and Interdisciplinary Science, CAS.}
}
\author{
Hehu Xie\footnote{LSEC, ICMSEC, Academy of Mathematics and Systems Science, Chinese Academy of
Sciences, Beijing 100190, China (hhxie@lsec.cc.ac.cn)} \ \ \ {\rm and} \ \ \
Manting Xie\footnote{LSEC, ICMSEC, Academy of Mathematics and Systems Science, Chinese Academy of
Sciences, Beijing 100190, China (xiemanting@lsec.cc.ac.cn)}
}
\date{}
\maketitle
\begin{abstract}
In this paper, we propose a computable error estimate of the Gross-Pitaevskii equation for
ground state solution of Bose-Einstein condensates by general conforming finite element methods on general meshes.
Based on the proposed error estimate, asymptotic lower bounds of the smallest eigenvalue and ground state energy can be obtained.
Several numerical examples are presented to validate our theoretical results in this paper.
\vskip0.3cm {\bf Keywords.} Bose-Einstein Condensates, Gross-Pitaevskii equation, Computable error estimates,
Finite element method, Lower bound.
\vskip0.2cm {\bf AMS subject classifications.} 65N30, 65N25, 65L15, 65B99.
\end{abstract}

\section{Introduction}
Bose-Einstein condensation (BEC) is one of the most important science discoveries in the last century.
When a dilute gas of trapped bosons (of the same species) is cooled down to ultra-low temperatures
(close to absolute zero), BEC could be formed \cite{DalGioPitaString,LiebSeiYang}.
Since 1995, the first experimental achievement of BECs in dilute ${}^{87}Rb$ gases, the Gross-Pitaevskii equation (GPE) \cite{Gross,JinLevermoreMcLaughlin}
has been used extensively to describe the single particle properties of BECs.
So far, it is found that the results obtained
by solving the GPE have showed excellent agreement with most of the experiments
\cite{AnglinKetterle,Cornell,DalGioPitaString,HauBuschLiuDutton}.

A lot of numerical methods for the computation of the time-independent GPE for ground state and the
time-dependent GPE for finding the dynamics of a BEC have been proposed,
for example: a time-splitting spectral method \cite{BaoJakschMarkowich,BaoJinMarkowich2},
a Crank-Nicolson type finite difference method \cite{Adhikari,AdhikariMuruganandam},
a Runge-Kutta type method \cite{EdwardsBurnett}, an explicit imaginary-time algorithm \cite{Cerimele.et.al.},
 a DIIS (direct inversion in the iterated subspace) method \cite{SchneiderFeder} and
 the optimal damping algorithm \cite{Cances,CancesChakirMaday}, multigrid methods
 and multilevel correction method \cite{XieXie} and so on.

For simplicity, in this paper, we are concerned with the following non-dimensionalized GPE problem:
Find $(\lambda,u)\in \mathbb{R}\times H^1(\Omega)$ such that
\begin{equation}\label{GPEsymply}
\left\{
\begin{array}{rcl}
-\Delta u + Wu + \zeta |u|^2u &=& \lambda u,\ \ \  {\rm in}\  \Omega,\\
u &=& 0,\ \ \ \ \  {\rm on}\ \partial \Omega,\\
\int_\Omega|u|^2d\Omega &=& 1,
\end{array}
\right.
\end{equation}
where $\Omega \subset \mathbb{R}^d$ $(d = 1,2,3)$ denotes the computing
domain which has the cone property \cite{Adams}, $\zeta$ is some positive constant and
$W(x) = \gamma_1 x^2_1 +\ldots + \gamma_d x^2_d \geq 0$
with  $\gamma_1, \ldots, \gamma_d > 0$ \cite{BaoTang,ZhouBEC}.  The ground state energy $E$ of
 BEC can be given by the following equations (cf. \cite{LiebSeiYang}):
\begin{equation}\label{Energy_Eq}
E = \int_\Omega\left(|\nabla u|^2 + W|u|^2 + \frac{\zeta}{2}|u|^4\right){\rm d}\Omega
= \lambda - \int_\Omega\frac{\zeta}{2}|u|^4{\rm d}\Omega,
\end{equation}
where $(\lambda,u)$ is the smallest eigenpair of (\ref{GPEsymply}).

The lower bounds of smallest eigenvalue of (\ref{GPEsymply}) and ground state energy (\ref{Energy_Eq})
are very critical questions that many people care about. So far, there have developed
 some methods to get lower bound of linear symmetric eigenvalue problems, i.e., the nonconforming finite element methods
 (see e.g., \cite{ArmentanoDuran,HuHuangLin,LinLuoXie_lowerbound,LinXie_lowerbound,LinXie_lowerbound2,LinXieLuoLiYang,LuoLinXie,YangZhangLin,ZhangYangChen}), interpolation constant based methods (see e.g., \cite{Liu,LiuOishi}) and computational error estimate methods (see e.g., \cite{CarstensenGallistl,CarstensenGedicke,SebestovaVejchodsky}). But there are no any result about the lower bounds 
 of the nonlinear eigenvalue problems. This paper is the first attempt to produce the lower bounds of the nonlinear eigenvalue problems. 

In this paper, we are concerned with the computable error estimates for the ground state of GPE by the finite element
method. As we know, the priori error estimates can only give the asymptotic convergence order. The a posteriori
error estimates are very important for the mesh adaption process. About the a posteriori error estimate for the
partial differential equations by the finite element method, please refer to
\cite{AinsworthOden,BabuskaRheinboldt_1,BabuskaRheinboldt_2,BrennerScott,NeittaanmakiRepin,Repin,Verfurth}
 and the references cited therein. This paper is propose a computable method to obtain asymptotic
 upper bound of the error estimate for the ground state eigenfunction approximation by
 the general conforming finite element methods on the general meshes.
 The approach is based on complementary energy method from
 \cite{HaslingerHlavacek,NeittaanmakiRepin,Repin,Vejchodsky_1,Vejchodsky_2}.
Based on the asymptotic upper bound of the eigenfunction approximation, we can also give the asymptotic lower bounds
of the smallest eigenvalue and ground state energy, which is another contribution of this paper.

An outline of the paper goes as follows. In Section \ref{Section_FEM}, we introduce the
finite element method for GPE. An asymptotic upper bound for the error estimate of the smallest eigenpair
approximation is given in Section \ref{Section_Upper_Bound}. In Section \ref{Section_Lower_Bound},
an asymptotic lower bounds of the smallest eigenvalue and ground state energy are also obtained based on the results in Section \ref{Section_Upper_Bound}.
 Some numerical examples are presented in Section \ref{Section_Numerical_Examples} to validate our theoretical results.
 Some concluding remarks are given in the last section.

\section{Finite element method for GPE}\label{Section_FEM}

In this section, we introduce some notation and the finite element method
for the GPE (\ref{GPEsymply}).
We shall use the standard notation for the Sobolev spaces $W^{s,p}(\Omega)$ and their
associated norms $\|\cdot\|_{s,p,\Omega}$ and seminorms $|\cdot|_{s,p,\Omega}$
(see, e.g., \cite{Adams}). For $p=2$, we denote
$H^s(\Omega)=W^{s,2}(\Omega)$ and $H_0^1(\Omega)=\{v\in H^1(\Omega):\ v|_{\partial\Omega}=0\}$,
where $v|_{\partial\Omega}=0$ is in the sense of trace,
$\|\cdot\|_{s,\Omega}=\|\cdot\|_{s,2,\Omega}$. In this paper, we set $V=H_0^1(\Omega)$
and use $\|\cdot\|_s$ to denote $\|\cdot\|_{s,\Omega}$ for simplicity.

For the aim of finite element
discretization, we define the corresponding weak form for (\ref{GPEsymply}) as follows: Find $(\lambda,u)\in \mathbb{R}\times V$ such that $b(u,u) = 1$ and
\begin{equation}\label{GPEweakform}
\hat{a}(u,v) = \lambda b(u,v),\ \ \  \forall v \in V,
\end{equation}
where
\begin{eqnarray*}
\hat{a}(u,v) &:=& a(u,v) + \int_\Omega\big((W-1)uv + \zeta|u|^2 uv\big)d\Omega,\\
 a(u,v) &:=& \int_\Omega\big(\nabla u\nabla v + uv\big)d\Omega,
\ \ b(u,v) ~:=~ \int_\Omega uvd\Omega.
\end{eqnarray*}

Obviously, $a(v,v) \geq 0 \mbox{ for all }v\in V$. We define $\|v\|_a = \sqrt{a(v,v)}\ {\rm and}\ \|v\|_b = \sqrt{b(v,v)}\mbox{ for all }v\in V$.
The following Rayleigh quotient expression holds for the smallest eigenvalue $\lambda$
\begin{equation}\label{RQ_u}
\lambda = \frac{\hat{a}(u,u)}{b(u,u)}.
\end{equation}
Now, let us demonstrate the finite element method \cite{BrennerScott,Ciarlet}
for the problem (\ref{GPEweakform}). First we
generate a shape-regular decomposition of the computing domain
$\Omega \subset \mathbb{R}^d$ $(d = 2,3)$ into triangles or rectangles for $d = 2$
(tetrahedrons or hexahedrons for $d = 3$) and the diameter of a cell $K \in \mathcal{T}_h$ is
denoted by $h_K$. The mesh diameter $h$ describes the maximum diameter of all cells
$K \in \mathcal{T}_h$. Based on the mesh $\mathcal{T}_h$, we construct the
conforming finite element space denoted by $V_h\subset V$. We assume that $V_h\subset V$
is a family of finite-dimensional spaces that satisfy the following assumption:
\begin{equation}\label{approximation_fem}
\lim_{h\rightarrow 0}\inf_{v \in V_h} \|w - v\|_a = 0,\ \ \ \forall w\in V.
\end{equation}

The standard finite element method for (\ref{GPEweakform}) is to solve the following
 eigenvalue problem: Find $(\lambda_h,u_h)\in \mathbb{R}\times V_h$ such that
 $b(u_h,u_h) = 1$ and
\begin{equation}\label{GPEfem}
\hat{a}(u_h,v_h) = \lambda_h b(u_h,v_h),\ \ \   \forall v_h \in V_h.
\end{equation}
Then we define
\begin{equation}\label{delta}
\delta_h(u) := \inf_{v_h\in V_h}\|u - v_h\|_a.
\end{equation}
From (\ref{GPEfem}), we know the following Rayleigh quotient for $\lambda_h$ holds
\begin{equation}\label{RQ_uh}
\lambda_h = \frac{\hat{a}(u_h,u_h)}{b(u_h,u_h)}.
\end{equation}
The approximation $E_h$ of ground state energy $E$ for BEC can be given by the following equations:
\begin{equation}\label{Energy_Eq_FEM}
E_h = \int_\Omega\left(|\nabla u_h|^2 + W|u_h|^2 + \frac{\zeta}{2}|u_h|^4\right){\rm d}\Omega
= \lambda_h - \int_\Omega\frac{\zeta}{2}|u_h|^4{\rm d}\Omega,
\end{equation}
where $(\lambda_h,u_h)$ is the smallest eigenpair of (\ref{GPEfem}) that is the approximation for the smallest 
eigenpair of (\ref{GPEsymply}).
\begin{lemma}\label{lemma:Maday}
(\cite[Theorem 1]{CancesChakirMaday})
There exists $h_0 > 0$, such that for all $0 < h < h_0$, the smallest eigenpair approximation
 $(\lambda_h,u_h)$ of (\ref{GPEfem}) having the following error estimates
\begin{eqnarray}
\|u - u_h\|_a &\leq& C_u\delta_h(u),\\
\|u - u_h\|_0 &\leq& C_u\eta_a(h)\|u - u_h\|_a \leq C_u\eta_a(h)\delta_h(u),\\
|\lambda - \lambda_h| &\leq& C_u\|u - u_h\|^2_a
+ C_u\|u - u_h\|_0\leq C_u\eta_a(h)\delta_h(u),
\end{eqnarray}
where $\eta_a(h)$ is defined as follows:
\begin{eqnarray}\label{eta_a_h}
\eta_a(h)=\|u - u_h\|_a+ \sup_{f\in L^2(\Omega),\|f\|_0=1}\inf_{v_h\in V_h}\|Tf-v_h\|_a
\end{eqnarray}
with the operator $T$ being defined as follows: Find $Tf\in u^{\perp}$ such that
\begin{eqnarray*}
a(Tf,v)+2(\zeta |u|^2(Tf),v)- (\lambda(Tf),v)=(f,v),\ \ \ \ \forall v\in u^{\perp},
\end{eqnarray*}
where $u^{\perp}=\big\{v\in H_0^1(\Omega): | \int_{\Omega}uvd\Omega=0\big\}$, 
here and hereafter $C_u$ (with or without subscripts) is some constant depending on eigenpair
$(\lambda,u)$ but independent of the mesh size $h$.
\end{lemma}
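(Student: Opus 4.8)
The plan is to treat the Gross--Pitaevskii equation as a nonlinear eigenvalue problem and to combine the abstract approximation theory for nonlinear equations (of Brezzi--Rappaz--Raviart type) with the classical Babu\v{s}ka--Osborn framework for eigenvalue approximation, the cubic nonlinearity being carried along as a higher-order perturbation. The whole argument rests on one structural hypothesis that I would establish first: the ground state $(\lambda,u)$ is simple and non-degenerate, meaning that the linearized operator encoded by $T$ is an isomorphism on $u^{\perp}$. Equivalently, the Hessian of the constrained energy is coercive on the tangent space $u^{\perp}$; this is what supplies the coercivity/inf--sup constant that drives every subsequent estimate and that forces the threshold $h_0$, since one needs $\delta_h(u)$ small enough for the discrete linearized form to inherit this coercivity.

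For the energy estimate $\|u-u_h\|_a\le C_u\delta_h(u)$ I would write the continuous and discrete problems as $F(u)=0$ and its Galerkin restriction, where $F\colon V\to V'$ absorbs the Rayleigh-quotient expression (\ref{RQ_u}) for $\lambda$ so that the eigenvalue is eliminated and one is left with a genuine equation in $u$. Subtracting the two weak forms yields a Galerkin orthogonality for the nonlinear residual; I would then split the residual difference into its linearization at $u$ plus a quadratic remainder in $e:=u-u_h$, test against $e$ minus its best approximation in $V_h$, and use coercivity of the linearized form to obtain $\|e\|_a\le C_u\inf_{v_h}\|u-v_h\|_a+(\text{remainder})$. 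The cubic term $\zeta|u|^2u$ produces a remainder scaling like $\|e\|_a^2$ or $\|e\|_0\|e\|_a$, which the a priori smallness of $e$ (for $h<h_0$) lets me absorb into the left-hand side; by (\ref{delta}) the surviving leading term is exactly $\delta_h(u)$.

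The $L^2$ estimate is an Aubin--Nitsche duality argument built on the same operator $T$. Setting $g=e/\|e\|_0$, I would solve the linearized dual problem $\phi=Tg\in u^{\perp}$, so that $\|e\|_0=(e,g)$ equals the linearized form evaluated at $(e,\phi)$; invoking Galerkin orthogonality to replace $\phi$ by $\phi-\phi_h$ for arbitrary $\phi_h\in V_h$, bounding by $\|e\|_a\,\inf_{v_h}\|\phi-v_h\|_a$, and recognizing the supremum over unit $L^2$ data as the dual-approximation summand in the definition (\ref{eta_a_h}) of $\eta_a(h)$, I arrive at $\|e\|_0\le C_u\eta_a(h)\|e\|_a$. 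The additional summand $\|u-u_h\|_a$ inside $\eta_a(h)$ is precisely what the nonlinear remainder from linearizing the cubic term contributes in this step; chaining the result with the energy estimate then gives the stated second inequality.

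Finally, for the eigenvalue I would use the Rayleigh-quotient identity comparing (\ref{RQ_u}) and (\ref{RQ_uh}). The standard linear computation produces a leading term of size $\|e\|_a^2$, namely $\hat{a}(e,e)-\lambda b(e,e)$ normalized by $b(u_h,u_h)$, while the difference of the quartic energies $\int_\Omega(|u|^4-|u_h|^4)d\Omega$ factors through $(|u|-|u_h|)$ and is controlled in $L^2$ by $\|e\|_0$ using the uniform $L^\infty$ (or $H^1\hookrightarrow L^6$) bounds on $u$ and $u_h$; this yields $|\lambda-\lambda_h|\le C_u\|e\|_a^2+C_u\|e\|_0$, and the previous two parts convert this into $C_u\eta_a(h)\delta_h(u)$. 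The main obstacle throughout is the nonlinearity: making the non-degeneracy hypothesis precise and transferring its coercivity to the discrete level for $h<h_0$, and then controlling every cubic and quartic remainder uniformly so that the bootstrap from the energy estimate to the $L^2$ estimate to the eigenvalue estimate closes without circularity, which is where the Sobolev embeddings and the a priori smallness of the error do the real work.
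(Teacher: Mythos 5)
The paper does not actually prove this lemma --- it is imported verbatim, with citation, as \cite[Theorem 1]{CancesChakirMaday}, so there is no internal proof to compare against. Your sketch correctly reconstructs the argument of that cited reference (non-degeneracy of the simple ground state giving coercivity of the linearized form on $u^{\perp}$, Galerkin orthogonality with absorption of the cubic remainder for the energy estimate, Aubin--Nitsche duality through the operator $T$ for the $L^2$ estimate, and the Rayleigh-quotient expansion with Sobolev-embedding control of the quartic terms for the eigenvalue), so it takes essentially the same route as the proof the paper relies on.
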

\section{Complementarity based a posteriori error estimator}\label{Section_Upper_Bound}
First, we define $H({\rm div};\Omega):= \{\bfp\in (L^2(\Omega))^d:\dive \bfp\in L^2(\Omega)\}\ (d = 2,3)$
and introduce the following Green's theorem.

\begin{lemma}\label{Monk}
Let $\Omega\subset \mathbb{R}^d\ (d=2,3)$ be a bounded Lipschitz domain with unit outward normal $\nu$ to $\partial\Omega$.
Then the following Green's formula holds
\begin{eqnarray}\label{Divergence_Equality}
\int_{\Omega}v{\rm div}\bfp d\Omega+\int_{\Omega}\bfp\cdot\nabla vd\Omega
=\int_{\partial\Omega}v\bfp\cdot\nu ds,\ \ \forall\bfp\in \bf W  \ {\rm and}\  \forall v\in H^1(\Omega), 
\end{eqnarray}
where $\mathbf W:=H({\rm div};\Omega)$.
Especially, we have
\begin{eqnarray}\label{Div_Equality}
\int_{\Omega}v{\rm div}\bfp d\Omega+\int_{\Omega}\bfp\cdot\nabla vd\Omega = 0,\ \ \ \forall \bfp\in \mathbf W\ {\rm and}\  \forall v\in V.
\end{eqnarray}
\end{lemma}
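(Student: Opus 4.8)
The plan is to establish the identity first for smooth data, where it reduces to the classical divergence theorem, and then to extend it to all of $\bfW$ and $H^1(\Omega)$ by a density argument; the second identity (\ref{Div_Equality}) will follow as the special case $v\in V=H_0^1(\Omega)$, for which the boundary term vanishes. First I would treat $\bfp\in (C^\infty(\overline\Omega))^d$ and $v\in C^\infty(\overline\Omega)$. Here the Leibniz rule gives the pointwise identity $\dive(v\bfp)=v\,\dive\bfp+\bfp\cdot\nabla v$, and integrating over $\Omega$ and invoking Gauss's divergence theorem, which is valid on bounded Lipschitz domains, rewrites $\int_\Omega\dive(v\bfp)\,d\Omega$ as the boundary flux $\int_{\partial\Omega}v\,\bfp\cdot\nu\,ds$. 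This yields (\ref{Divergence_Equality}) for smooth data directly.

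Next I would pass to the limit. Both volume integrals on the left are bounded with respect to the natural norms, since $|\int_\Omega v\,\dive\bfp\,d\Omega|\le\|v\|_0\,\|\dive\bfp\|_0$ and $|\int_\Omega\bfp\cdot\nabla v\,d\Omega|\le\|\bfp\|_0\,\|v\|_1$ by Cauchy--Schwarz. I would then use that $(C^\infty(\overline\Omega))^d$ is dense in $\bfW=H(\dive;\Omega)$ in the graph norm $(\|\bfp\|_0^2+\|\dive\bfp\|_0^2)^{1/2}$ and that $C^\infty(\overline\Omega)$ is dense in $H^1(\Omega)$. Choosing approximating sequences $\bfp_n\to\bfp$ and $v_n\to v$, the two volume integrals evaluated at $(\bfp_n,v_n)$ converge to their values at $(\bfp,v)$.

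The main obstacle is the boundary term. For general $\bfp\in H(\dive;\Omega)$ the normal component $\bfp\cdot\nu$ is not a classical function but an element of $H^{-1/2}(\partial\Omega)$, so the symbol $\int_{\partial\Omega}v\,\bfp\cdot\nu\,ds$ must be read as the duality pairing $\langle\bfp\cdot\nu,\gamma_0 v\rangle_{H^{-1/2}(\partial\Omega),\,H^{1/2}(\partial\Omega)}$, where $\gamma_0$ denotes the $H^1$ trace. To control it I would invoke the normal-trace theorem for $H(\dive;\Omega)$, which furnishes a bounded trace map $\bfp\mapsto\bfp\cdot\nu$ into $H^{-1/2}(\partial\Omega)$, together with the continuity of $\gamma_0:H^1(\Omega)\to H^{1/2}(\partial\Omega)$. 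This makes the pairing continuous in $(\bfp,v)$ and allows the boundary flux to pass to the limit, completing the proof of (\ref{Divergence_Equality}).

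Finally, for (\ref{Div_Equality}) I would take $v\in V=H_0^1(\Omega)$ and approximate it by $C_0^\infty(\Omega)$ functions, whose boundary flux is identically zero; passing to the limit in the already-established (\ref{Divergence_Equality}) kills the right-hand side. Equivalently, $\gamma_0 v=0$ annihilates the duality pairing above, so the boundary term in (\ref{Divergence_Equality}) vanishes and the claimed orthogonality $\int_\Omega v\,\dive\bfp\,d\Omega+\int_\Omega\bfp\cdot\nabla v\,d\Omega=0$ follows. Notably, this last step sidesteps the normal-trace subtlety entirely, since it uses only the vanishing trace of $v$; as (\ref{Div_Equality}) is the form actually needed for the complementary-energy estimator, one may regard this direct route as the essential content of the lemma.
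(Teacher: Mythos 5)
Your proposal is correct. Note that the paper itself gives no proof of this lemma at all: it is stated as a known Green's formula for $H(\mathrm{div};\Omega)$ (the label suggests it is quoted from Monk's book, and analogous results appear in the cited complementarity literature), so there is no ``paper proof'' to compare against; what you have written is the standard textbook argument that the paper implicitly relies on. Your handling of the one genuine subtlety is right: for general $\bfp\in H(\mathrm{div};\Omega)$ the boundary term in (\ref{Divergence_Equality}) only makes sense as the duality pairing $\langle\bfp\cdot\nu,\gamma_0 v\rangle_{H^{-1/2}(\partial\Omega),H^{1/2}(\partial\Omega)}$, so invoking the normal-trace theorem is not optional decoration but is needed even to interpret the statement (indeed, one can view the lemma as essentially a restatement of that theorem, since the theorem's usual proof is exactly your density argument). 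Your closing observation is also the right one to emphasize: identity (\ref{Div_Equality}), which is the only part of the lemma actually used in the paper's error estimator, follows with no trace machinery whatsoever --- for $v\in C_0^\infty(\Omega)$ it is the definition of the distributional divergence $\mathrm{div}\,\bfp\in L^2(\Omega)$, and the general case $v\in H_0^1(\Omega)$ follows by density and the Cauchy--Schwarz bounds you wrote down. So the proof stands, and its elementary second half is the essential content from the paper's point of view.
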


\begin{theorem}
Assume $h<h_0$, the given smallest eigenpair approximation $(\lambda_h,u_h)\in\mathbb{R}\times V_h$  has the
following error estimates:
\begin{equation}\label{Upper_Bound}
\|u-u_h\|_a \leq \min_{\bfp\in\bfW}\frac{1}{1-\alpha\eta_a(h)}\eta(\lambda_h,u_h,\bfp),
\end{equation}
where $\eta(\lambda_h,u_h,\bfp)$ is defined as follows
\begin{eqnarray}\label{Definition_Eta}
\eta(\lambda_h,u_h,\bfp) = \big(\|\lambda_hu_h - Wu_h - \zeta|u_h|^2 u_h + \dive\bfp\|_0^2 + \|\bfp - \nabla u_h\|_0^2\big)^{1/2},
\end{eqnarray}
$\alpha = C_u\|u\|_a + C_u\|W -1 - \lambda_h\|_{L^{\infty}(\Omega)} + 2C_u|\zeta|(\|u\|_a^2 + \|u_h\|_a^2)$,
and the constant $C$ is independent of the mesh size $h$, vector function $\bfp$ and eigenfunction approximation $u_h$.
\end{theorem}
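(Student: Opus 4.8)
The plan is to estimate the energy norm of the error $e := u - u_h$ directly by expanding $\|e\|_a^2 = a(e,e) = a(u,e) - a(u_h,e)$ and eliminating the inaccessible exact quantities in favour of computable residuals. First I would use the continuous weak form (\ref{GPEweakform}) with test function $v = e \in V$ to rewrite $a(u,e) = \lambda b(u,e) - \int_\Omega\big((W-1)ue + \zeta|u|^2ue\big)\,d\Omega$. The decisive step is then to insert an arbitrary flux $\bfp\in\bfW$ into the remaining term $a(u_h,e)$: writing $\nabla u_h\cdot\nabla e = (\nabla u_h - \bfp)\cdot\nabla e + \bfp\cdot\nabla e$ and applying the Green formula (\ref{Div_Equality}) of Lemma \ref{Monk} to the last summand (legitimate since $e\in V = H_0^1(\Omega)$) converts $\int_\Omega\bfp\cdot\nabla e\,d\Omega$ into $-\int_\Omega e\,\dive\bfp\,d\Omega$. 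This is precisely what produces the complementary structure.

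After collecting terms, I expect $\|e\|_a^2$ to split as $\int_\Omega(\bfp - \nabla u_h)\cdot\nabla e\,d\Omega + \int_\Omega e\big[\lambda u - Wu + u - \zeta|u|^2u + \dive\bfp - u_h\big]\,d\Omega$. The goal is to recognise the residual $R := \lambda_hu_h - Wu_h - \zeta|u_h|^2u_h + \dive\bfp$ inside the bracket; substituting $\dive\bfp = R - \lambda_hu_h + Wu_h + \zeta|u_h|^2u_h$ and splitting $\lambda u - \lambda_hu_h = (\lambda-\lambda_h)u + \lambda_h e$ rewrites the bracket as $R + (\lambda-\lambda_h)u - (W-1-\lambda_h)e - \zeta(|u|^2u - |u_h|^2u_h)$. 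The two leading contributions $\int_\Omega(\bfp-\nabla u_h)\cdot\nabla e\,d\Omega + \int_\Omega R\,e\,d\Omega$ are then controlled by a single (vector) Cauchy--Schwarz inequality, yielding $\eta(\lambda_h,u_h,\bfp)\,\|e\|_a$, which supplies the numerator of (\ref{Upper_Bound}).

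The remaining three terms must be shown to be genuinely of order $\eta_a(h)\|e\|_a^2$. For each I would peel off one factor $\|e\|_0 \le C_u\eta_a(h)\|e\|_a$ from Lemma \ref{lemma:Maday} and bound the other error factor by $\|e\|_0\le\|e\|_a$, which holds because $a(e,e)=\|\nabla e\|_0^2+\|e\|_0^2$. The eigenvalue term $(\lambda-\lambda_h)\int_\Omega ue\,d\Omega$ uses $|\lambda-\lambda_h|\le C_u\eta_a(h)\delta_h(u)\le C_u\eta_a(h)\|e\|_a$ together with $\|u\|_0\le\|u\|_a$, contributing $C_u\|u\|_a$ to $\alpha$; the potential term $\int_\Omega(W-1-\lambda_h)e^2\,d\Omega$ contributes $C_u\|W-1-\lambda_h\|_{L^\infty(\Omega)}$ directly.

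The hard part will be the cubic term $\zeta\int_\Omega(|u|^2u-|u_h|^2u_h)e\,d\Omega$, because the coefficient arising from the factorisation $|u|^2u-|u_h|^2u_h = (u^2+uu_h+u_h^2)e$ is not uniformly bounded for $d=3$. Here I would avoid $L^\infty$ bounds and instead invoke the Sobolev embeddings $H_0^1(\Omega)\hookrightarrow L^4(\Omega),L^6(\Omega)$ (valid for $d\le 3$) with Hölder's inequality, using $|u^2+uu_h+u_h^2|\le 2(u^2+u_h^2)$ and an interpolation such as $\|e\|_{L^3}^2\le C\|e\|_0\|e\|_a$, so that the term is bounded by $2C_u|\zeta|(\|u\|_a^2+\|u_h\|_a^2)\eta_a(h)\|e\|_a^2$, all embedding constants being absorbed into $C_u$. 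Summing the three lower-order contributions gives $\alpha\eta_a(h)\|e\|_a^2$ with exactly the stated $\alpha$, whence $\|e\|_a^2\le\eta(\lambda_h,u_h,\bfp)\|e\|_a+\alpha\eta_a(h)\|e\|_a^2$. Dividing by $\|e\|_a$, rearranging (legitimate once $h<h_0$ is small enough that $\alpha\eta_a(h)<1$, which is where the asymptotic nature enters), and finally taking the infimum over $\bfp\in\bfW$ — permissible since $\alpha$ and $\eta_a(h)$ are independent of $\bfp$ — yields (\ref{Upper_Bound}).
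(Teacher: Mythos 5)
Your proposal is correct and follows essentially the same route as the paper's own proof: both expand the error form $a(u-u_h,\cdot)$ using the two weak formulations, insert an arbitrary flux $\bfp\in\bfW$ through the Green formula of Lemma \ref{Monk} to produce the residual pair $\lambda_hu_h - Wu_h - \zeta|u_h|^2u_h + \dive\bfp$ and $\bfp-\nabla u_h$ (bounded jointly by Cauchy--Schwarz to give $\eta(\lambda_h,u_h,\bfp)$), and then absorb the remaining eigenvalue, potential and cubic terms into $\alpha\,\eta_a(h)\|u-u_h\|_a$ via Lemma \ref{lemma:Maday}, H\"older and the Sobolev embedding before rearranging and minimizing over $\bfp$. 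The only cosmetic differences are that you test directly with $v=u-u_h$ rather than a general $v\in V$, and you handle the cubic term with an $L^3$ interpolation inequality where the paper uses a three-factor H\"older estimate with $\|v\|_{L^6(\Omega)}$ --- these are equivalent in substance.
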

\begin{proof}
From (\ref{GPEweakform}), (\ref{GPEfem}) and (\ref{Div_Equality}), for all $\bfp\in\bfW$ and $v\in V$,
we have
\begin{eqnarray*}
a(u-u_h,v) &=& a(u,v) - a(u_h,v)\\
&=& \lambda b(u,v) - \int_{\Omega}\big((W-1)uv + \zeta|u|^2 uv\big)d\Omega \\
& & \quad -~ \int_\Omega(\nabla u_h\cdot\nabla v + u_hv)d\Omega + \int_\Omega\dive\bfp vd\Omega + \int_\Omega\bfp\cdot\nabla vd\Omega\\
&=& (\lambda_hu_h - Wu_h - \zeta|u_h|^2 u_h + \dive\bfp, v) + (\bfp - \nabla u_h, \nabla v)\\
& & \quad +~ \big(\lambda u - \lambda_hu_h + (W-1)(u_h - u) + \zeta|u_h|^2_h - \zeta|u|^2 u, v\big)\\
&\leq& \|\lambda_hu_h - Wu_h - \zeta|u_h|^2 u_h + \dive\bfp\|_0\|v\|_0 + \|\bfp - \nabla u_h\|_0\|\nabla v\|_0\\
& & + \big((\lambda - \lambda_h) u - (W -1 - \lambda_h)(u - u_h) + \zeta(|u|^2 u - |u_h|^2 u_h), v\big)\\
&\leq& \big(\|\lambda_hu_h - Wu_h - \zeta|u_h|^2 u_h + \dive\bfp\|_0^2 + \|\bfp - \nabla u_h\|_0\big)^{1/2}\|v\|_a\\
& & \quad +~ \big(|\lambda - \lambda_h|\|u\|_a + \|W -1 - \lambda_h\|_{L^{\infty}(\Omega)}\|u - u_h\|_0 \\
& & \quad +~ 2|\zeta|(\|u\|_{L^6(\Omega)}^2 + \|u_h\|_{L^6(\Omega)}^2)\|u-u_h\|_0\big)\|v\|_{L^6(\Omega)}\\
&\leq& \big(\|\lambda_hu_h - Wu_h - \zeta|u_h|^2 u_h + \dive\bfp\|_0^2 + \|\bfp - \nabla u_h\|_0\big)^{1/2}\|v\|_a\\
& & \quad +~ \big(|\lambda - \lambda_h|\|u\|_a + \|W -1 - \lambda_h\|_{L^{\infty}(\Omega)}\|u - u_h\|_0 \\
& & \quad +~ 2C|\zeta|(\|u\|_a^2 + \|u_h\|_a^2)\|u-u_h\|_0\big)\|v\|_a.
\end{eqnarray*}
Together with Lemma \ref{lemma:Maday}, we have
\begin{eqnarray*}
a(u-u_h,v) &\leq& \big(\|\lambda_hu_h - Wu_h - \zeta|u_h|^2 u_h + \dive\bfp\|_0^2 + \|\bfp - \nabla u_h\|_0\big)^{1/2}\|v\|_a\\
 & & \quad +~ \alpha\eta_a(h)\|u - u_h\|_0\|v\|_a,
\end{eqnarray*}
where $\alpha = C_u\|u\|_a + C_u\|W -1 - \lambda_h\|_{L^{\infty}(\Omega)} + 2C_uC|\zeta|(\|u\|_a^2 + \|u_h\|_a^2)$.

That means we can draw the conclusion that
$$\|u-u_h\|_a \leq \eta(\lambda_h,u_h,\bfp) + \alpha\eta_a(h)\|u-u_h\|_a, \ \ \forall \bfp\in\bfW.$$
Then the desired result (\ref{Upper_Bound}) can be obtained by the arbitrariness of $\bfp\in\bfW$ and the proof is complete.
\end{proof}

Now, we introduce how to choose $\bfp^*\in \bf W$ such that
\begin{equation}\label{Optimization}
\eta(\lambda_h,u_h,\bfp^*) = \min_{\bfp\in\bfW}\eta(\lambda_h,u_h,\bfp).
\end{equation}

\begin{lemma}(\cite{Vejchodsky_1})\label{Dual_lemma}
The optimization problem (\ref{Optimization}) is equivalent to the following partial differential equation:
Find $\bfp^*\in\bfW$ such that
\begin{eqnarray}\label{Dual_Problem}
a^*(\bfp^*,\bfq)&=& \mathcal F^*(\bfq),\ \ \ \ \forall \bfq\in \mathbf W,\
\end{eqnarray}
where
\begin{eqnarray*}
a^*(\bfp^*,\bfp)&=&\int_{\Omega}\big({\rm div}\bfp^*{\rm div}\bfq
+\bfp^*\cdot\bfq\big)d\Omega, \\
\mathcal F^*(\bfq)&=&-\int_{\Omega}\big(\lambda_h u_h - \zeta |u_h|^2u_h - (W-1)u_h,\dive\bfq\big)d\Omega.
\end{eqnarray*}
Moreover, $a^*(\cdot,\cdot)$ defines an inner product for the space $\mathbf W$. The corresponding norm
is $|||\bfp|||_{*}=a^*(\bfp,\bfp)$, and the dual problem (\ref{Dual_Problem}) has a unique solution.
\end{lemma}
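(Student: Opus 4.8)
The plan is to recognize that minimizing $\eta(\lambda_h,u_h,\bfp)$ is the same as minimizing the squared functional
\[
J(\bfp):=\eta(\lambda_h,u_h,\bfp)^2 = \|g+\dive\bfp\|_0^2 + \|\bfp-\nabla u_h\|_0^2,
\]
over the Hilbert space $\bfW$, where I abbreviate $g:=\lambda_h u_h - Wu_h - \zeta|u_h|^2 u_h$. The purely quadratic part of $J$ is exactly $\|\dive\bfp\|_0^2 + \|\bfp\|_0^2 = a^*(\bfp,\bfp)$, which is the squared $H(\dive;\Omega)$ norm; hence $J$ is strictly convex and coercive on $\bfW$ and possesses a unique minimizer characterized by the vanishing of its first variation. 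This reduces everything to identifying the Euler--Lagrange equation of $J$ with the stated dual problem.

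First I would compute the Gateaux derivative: for any $\bfq\in\bfW$,
\[
\frac{d}{dt}\Big|_{t=0} J(\bfp^*+t\bfq)
= 2\int_\Omega (g+\dive\bfp^*)\dive\bfq\,d\Omega
+ 2\int_\Omega (\bfp^*-\nabla u_h)\cdot\bfq\,d\Omega .
\]
Setting this to zero and collecting the terms involving $\bfp^*$ on the left produces $a^*(\bfp^*,\bfq)$ on the left-hand side and leaves the right-hand side
$-\int_\Omega g\,\dive\bfq\,d\Omega + \int_\Omega \nabla u_h\cdot\bfq\,d\Omega$.

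The crucial step is to match this right-hand side with $\mathcal F^*(\bfq)$. Since $u_h\in V = H_0^1(\Omega)$, I would invoke Green's formula (\ref{Div_Equality}) to replace $\int_\Omega \nabla u_h\cdot\bfq\,d\Omega$ by $-\int_\Omega u_h\,\dive\bfq\,d\Omega$, the boundary term vanishing precisely because $u_h$ has zero trace. Combining the two divergence integrals and using $-Wu_h + u_h = -(W-1)u_h$ collapses the right-hand side to
$-\int_\Omega\big(\lambda_h u_h - \zeta|u_h|^2 u_h - (W-1)u_h\big)\dive\bfq\,d\Omega = \mathcal F^*(\bfq)$, which establishes the equivalence of (\ref{Optimization}) and (\ref{Dual_Problem}). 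This integration-by-parts identity, leaning essentially on the homogeneous boundary condition built into $V$, is the one point requiring real care; the remaining manipulations are routine.

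Finally, for the well-posedness claims I would verify the inner-product axioms directly: $a^*$ is bilinear and symmetric by inspection, and $a^*(\bfp,\bfp) = \|\bfp\|_0^2 + \|\dive\bfp\|_0^2$ is positive definite, so $a^*$ is a genuine inner product inducing $|||\cdot|||_*$. Continuity of $a^*$ follows from Cauchy--Schwarz, coercivity holds with constant $1$, and $\mathcal F^*$ is a bounded linear functional because $\lambda_h u_h - \zeta|u_h|^2 u_h - (W-1)u_h\in L^2(\Omega)$. Thus the Lax--Milgram (equivalently Riesz representation) theorem delivers a unique solution $\bfp^*\in\bfW$, and the strict convexity of $J$ confirms that this solution is the unique minimizer of the optimization problem.
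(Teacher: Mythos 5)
The paper offers no internal proof of this lemma --- it is quoted directly from the reference [Vejchodsk\'y], so there is nothing in the text to compare against; judged on its own, your proof is correct and is essentially the standard argument underlying the cited result. Your computation of the first variation, the use of Green's formula (\ref{Div_Equality}) with $v=u_h\in H_0^1(\Omega)$ to convert $\int_\Omega \nabla u_h\cdot\bfq\,d\Omega$ into $-\int_\Omega u_h\,\dive\bfq\,d\Omega$ (which is exactly what produces the $-(W-1)u_h$ term in $\mathcal F^*$), and the Lax--Milgram/strict-convexity argument for equivalence and unique solvability are all sound.
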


Now, we states some properties for the estimator $\eta(\lambda_h,u_h,\bfp)$.
\begin{lemma}(\cite{Vejchodsky_1})\label{Optimization_Property_Lemma}
Assume $\bfp^*$ be the solution of the dual problem (\ref{Dual_Problem}) and let $\lambda_h\in \mathcal{R}$,
$u_h\in V$ and $\bfp\in\bfW$ be arbitrary. Then the following equality holds
\begin{eqnarray}\label{Optimization_Property}
\eta^2(\lambda_h,u_h,\bfp)&=&\eta^2(\lambda_h,u_h,\bfp^*)+|||\bfp^*-\bfp|||_*^2.
\end{eqnarray}
\end{lemma}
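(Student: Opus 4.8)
The plan is to recognize that $\bfp\mapsto\eta^2(\lambda_h,u_h,\bfp)$ is, up to an additive constant, exactly the energy functional associated with the inner product $a^*(\cdot,\cdot)$ and the linear form $\mathcal F^*$, and then to invoke the standard energy identity for the minimizer of such a quadratic functional. First I would abbreviate the residual $r:=\lambda_h u_h - Wu_h - \zeta|u_h|^2u_h$, so that $\eta^2(\lambda_h,u_h,\bfp)=\|r+\dive\bfp\|_0^2+\|\bfp-\nabla u_h\|_0^2$. Expanding the two squared $L^2$-norms and collecting terms gives
\begin{equation*}
\eta^2(\lambda_h,u_h,\bfp)=\big(\|\dive\bfp\|_0^2+\|\bfp\|_0^2\big)+2(r,\dive\bfp)-2(\nabla u_h,\bfp)+\big(\|r\|_0^2+\|\nabla u_h\|_0^2\big),
\end{equation*}
where the first bracket is precisely $a^*(\bfp,\bfp)=|||\bfp|||_*^2$ and the last bracket is a constant $C_0$ independent of $\bfp$.

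The key step is to reconcile the two linear terms with $\mathcal F^*$. Since $u_h\in V_h\subset V$ and $\bfp\in\bfW$, the Green identity (\ref{Div_Equality}) with $v=u_h$ yields $(\nabla u_h,\bfp)=-(u_h,\dive\bfp)$, and hence
\begin{equation*}
2(r,\dive\bfp)-2(\nabla u_h,\bfp)=2(r+u_h,\dive\bfp)=-2\mathcal F^*(\bfp),
\end{equation*}
because $r+u_h=\lambda_h u_h-\zeta|u_h|^2u_h-(W-1)u_h$ is exactly the function appearing in $\mathcal F^*$. Thus $\eta^2(\lambda_h,u_h,\bfp)=a^*(\bfp,\bfp)-2\mathcal F^*(\bfp)+C_0$. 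I expect this integration-by-parts bookkeeping to be the only nontrivial step: the definition of $\eta$ uses the residual $r$, whereas $\mathcal F^*$ is built from $r+u_h$, and the extra $u_h$ is supplied exactly by the linear contribution $-2(\nabla u_h,\bfp)$ of the flux term after integrating by parts.

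Finally I would substitute $\bfp=\bfp^*+(\bfp-\bfp^*)$ and use the bilinearity of $a^*$ and the linearity of $\mathcal F^*$ to obtain
\begin{equation*}
\eta^2(\lambda_h,u_h,\bfp)=\eta^2(\lambda_h,u_h,\bfp^*)+2\big(a^*(\bfp^*,\bfp-\bfp^*)-\mathcal F^*(\bfp-\bfp^*)\big)+a^*(\bfp-\bfp^*,\bfp-\bfp^*).
\end{equation*}
Taking $\bfq=\bfp-\bfp^*\in\bfW$ as a test function in the dual equation (\ref{Dual_Problem}) makes the middle cross term vanish, leaving $a^*(\bfp-\bfp^*,\bfp-\bfp^*)=|||\bfp^*-\bfp|||_*^2$ and hence the claimed identity (\ref{Optimization_Property}). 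Note that the constant $C_0$ cancels automatically because it is common to both $\eta^2(\lambda_h,u_h,\bfp)$ and $\eta^2(\lambda_h,u_h,\bfp^*)$, so no hypothesis beyond the dual equation and Green's formula is needed.
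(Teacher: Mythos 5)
Your proof is correct. Note that the paper itself gives no proof of this lemma --- it is imported by citation from \cite{Vejchodsky_1} --- so your argument supplies the details rather than paralleling a proof in the text; in substance it is the standard argument behind that citation. You correctly recognize $\bfp\mapsto\eta^2(\lambda_h,u_h,\bfp)$ as the quadratic functional $a^*(\bfp,\bfp)-2\mathcal{F}^*(\bfp)+C_0$, for which the dual problem (\ref{Dual_Problem}) is exactly the Euler--Lagrange (orthogonality) condition, and then the Pythagorean identity (\ref{Optimization_Property}) follows by expanding around $\bfp^*$ and killing the cross term with the test function $\bfq=\bfp-\bfp^*$. The one step requiring care --- which you handle correctly --- is the bookkeeping between the two residuals: $\eta$ is built from $\lambda_h u_h - Wu_h - \zeta|u_h|^2u_h$, while $\mathcal{F}^*$ uses $\lambda_h u_h - \zeta|u_h|^2u_h-(W-1)u_h$, and the extra $u_h$ is produced precisely by the flux term $-2(\nabla u_h,\bfp)$ under the Green formula (\ref{Div_Equality}). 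Two minor remarks: first, that Green formula only requires $u_h\in V$, so your appeal to $u_h\in V_h\subset V$ is an unnecessary restriction --- the lemma as stated allows arbitrary $u_h\in V$, and your argument covers that case verbatim; second, your computation implicitly corrects two typos in the paper's statement of Lemma \ref{Dual_lemma} (the norm should read $|||\bfp|||_*^2=a^*(\bfp,\bfp)$, and the arguments of $a^*$ in its definition should be $(\bfp,\bfq)$), and you use the versions consistent with the identity being proved.
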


Choosing a certain approximate solution $\bfp_h\in \mathbf W$ of the dual problem (\ref{Dual_Problem}), we can give a
computable asymptotic upper bound of the error estimate for the eigenfunction approximation.

\begin{corollary}\label{Upper_Bound_Computable_corollary}
Assume $(\lambda,u)\in\mathbb{R}\times V$ is the ground state solution of (\ref{GPEweakform}),
and $(\lambda_h,u_h)$ is the solution of eigenvalue problem (\ref{GPEfem}).
Then the error estimate has the following upper bound
\begin{eqnarray}\label{Upper_Bound_Computable}
\|u-u_h\|_a&\leq&\frac{1}{1-\alpha\eta_a(h)}\eta(\lambda_h,u_h,\bfp_h).
\end{eqnarray}
\end{corollary}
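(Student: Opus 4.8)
The plan is to obtain the corollary as a direct consequence of the Theorem, in which the minimum over all $\bfp\in\bfW$ has already been established as an upper bound for $\|u-u_h\|_a$. Since the estimate (\ref{Upper_Bound}) holds with the \emph{infimal} admissible flux on the right-hand side, and since the chosen approximate solution $\bfp_h$ is itself a competitor in that minimization, replacing the minimizer by $\bfp_h$ can only enlarge the bound. Concretely, I would start from (\ref{Upper_Bound}), which for $h<h_0$ guarantees both the positivity of the factor $1/(1-\alpha\eta_a(h))$ and the inequality
\[
\|u-u_h\|_a \;\le\; \frac{1}{1-\alpha\eta_a(h)}\,\min_{\bfp\in\bfW}\eta(\lambda_h,u_h,\bfp).
\]

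The single remaining step is to bound the minimum from above by the value at $\bfp_h$. The cleanest justification uses Lemma \ref{Optimization_Property_Lemma}: taking $\bfp=\bfp_h$ there yields
\[
\eta^2(\lambda_h,u_h,\bfp_h)=\eta^2(\lambda_h,u_h,\bfp^*)+|||\bfp^*-\bfp_h|||_*^2\;\ge\;\eta^2(\lambda_h,u_h,\bfp^*),
\]
so that $\min_{\bfp\in\bfW}\eta(\lambda_h,u_h,\bfp)=\eta(\lambda_h,u_h,\bfp^*)\le\eta(\lambda_h,u_h,\bfp_h)$ for an arbitrary $\bfp_h\in\bfW$. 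Substituting this into the displayed bound gives (\ref{Upper_Bound_Computable}) at once. Even without Lemma \ref{Optimization_Property_Lemma}, the same conclusion follows from the elementary fact that the infimum over a set is no larger than the value at any fixed member of that set.

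There is no genuine obstacle here, since the analytic content has been absorbed into the Theorem and into the variational characterization of $\bfp^*$; the corollary is essentially a bookkeeping statement that trades the non-computable minimizer $\bfp^*$ for a computable approximation $\bfp_h$. The only point meriting a line of care is that the prefactor $1/(1-\alpha\eta_a(h))$ must stay positive and finite, which is exactly the role of the hypothesis $h<h_0$ inherited from Lemma \ref{lemma:Maday} (there $\eta_a(h)\to0$ as $h\to0$, so $\alpha\eta_a(h)<1$ for $h$ small); I would simply note that this hypothesis is in force and conclude.
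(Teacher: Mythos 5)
Your proposal is correct and matches the paper's (implicit) argument: the paper states this corollary without proof, treating it exactly as you do --- as an immediate consequence of the Theorem, since the computable $\bfp_h\in\bfW$ is an admissible competitor in the minimization, so $\min_{\bfp\in\bfW}\eta(\lambda_h,u_h,\bfp)\leq\eta(\lambda_h,u_h,\bfp_h)$. Your additional remarks on the positivity of $1/(1-\alpha\eta_a(h))$ and the optional appeal to Lemma~\ref{Optimization_Property_Lemma} are sound but not needed beyond what the paper intends.
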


Hereafter, we also discuss the efficiency of the estimator $\eta(\lambda_h,u_h,\mathbf y^*)$ and
$\eta(\lambda_h,u_h,\mathbf y_h)$.
\begin{theorem}\label{Efficiency_Theorem}
Let $(\lambda,u)$ be the smallest exact solution of (\ref{GPEweakform}), $(\lambda_h,u_h)$ be
the smallest eigenpair approximation of (\ref{GPEfem}),
and $\bfp^*$ be the solution of (\ref{Optimization}).
Then we have
\begin{eqnarray}\label{Efficiency}
\theta_1\|u-u_h\|_a \leq
\eta(\lambda_h,u_h,\mathbf \bfp^*)\leq \theta_2\|u-u_h\|_a,
\end{eqnarray}
where
\begin{eqnarray*}
\theta_1:={1-\alpha\eta_a(h)}\ \ \ {\rm and}\ \ \
\theta_2:=\sqrt{1+C_1\eta_a(h)}.
\end{eqnarray*}
Further, we have the following asymptotic property of the estimator
\begin{eqnarray}\label{Exactness}
\lim_{h\rightarrow 0}\frac{\eta(\lambda_h,u_h,\bfp^*)}{\|u-u_h\|_a}=1.
\end{eqnarray}
\end{theorem}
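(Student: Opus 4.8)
The plan is to prove the two inequalities in (\ref{Efficiency}) separately and then obtain (\ref{Exactness}) by squeezing the ratio between $\theta_1$ and $\theta_2$. The lower bound is essentially free: since $\bfp^*$ attains the minimum in (\ref{Optimization}), the a posteriori upper bound (\ref{Upper_Bound}) specializes to $\|u-u_h\|_a \le (1-\alpha\eta_a(h))^{-1}\eta(\lambda_h,u_h,\bfp^*)$, which rearranges at once to $\theta_1\|u-u_h\|_a \le \eta(\lambda_h,u_h,\bfp^*)$. No further argument is needed there.

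For the upper bound I would exploit the minimality of $\bfp^*$: by Lemma \ref{Optimization_Property_Lemma} one has $\eta(\lambda_h,u_h,\bfp^*)\le \eta(\lambda_h,u_h,\bfp)$ for every admissible $\bfp$, so it suffices to bound $\eta(\lambda_h,u_h,\bfp)$ for one well-chosen competitor. I take $\bfp=\nabla u$, which lies in $\bfW$ because $\dive\nabla u = \Delta u = Wu + \zeta|u|^2u - \lambda u \in L^2(\Omega)$ by the strong form (\ref{GPEsymply}) together with the boundedness of $\Omega$ and $W$. With this choice the second term of $\eta^2$ collapses to $\|\nabla u-\nabla u_h\|_0^2 = |u-u_h|_1^2 \le \|u-u_h\|_a^2$, while substituting $\Delta u$ turns the residual in the first term into $(W-\lambda_h)(u-u_h)+(\lambda_h-\lambda)u+\zeta(|u|^2u-|u_h|^2u_h)$. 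I then bound this in $L^2$ term by term: $|\lambda-\lambda_h|$ and $\|u-u_h\|_0$ are controlled through Lemma \ref{lemma:Maday}, and the cubic difference through the pointwise Lipschitz estimate $\||u|^2u-|u_h|^2u_h\|_0 \le C(\|u\|_{L^\infty}^2+\|u_h\|_{L^\infty}^2)\|u-u_h\|_0$. Every surviving factor is either $\|u-u_h\|_0$ or $|\lambda-\lambda_h|$, each of which Lemma \ref{lemma:Maday} renders $O(\eta_a(h)\|u-u_h\|_a)$, so the first term is only $O\big((\eta_a(h))^2\|u-u_h\|_a^2\big)$. Adding the two contributions gives $\eta^2(\lambda_h,u_h,\nabla u)\le(1+C_1\eta_a(h))\|u-u_h\|_a^2$ for $h$ small, and minimality of $\bfp^*$ yields $\eta(\lambda_h,u_h,\bfp^*)\le\theta_2\|u-u_h\|_a$.

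The main obstacle is the cubic term. In the opening theorem of this section a test function $v$ carried an $L^6$ factor, which is what kept $\zeta(|u|^2u-|u_h|^2u_h)$ of higher order; here there is no test function, so to preserve the higher order I must measure $u-u_h$ in $L^2$ (then invoke Lemma \ref{lemma:Maday}) and consequently control $|u|^2+|u||u_h|+|u_h|^2$ in $L^\infty$. This forces a uniform $L^\infty$ bound on the discrete ground states $u_h$, and it is precisely here that the constants depend genuinely on the eigenpair $(\lambda,u)$; securing (or assuming) this uniform bound is the real work behind the clean statement.

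Finally, combining the two bounds gives $1-\alpha\eta_a(h) \le \eta(\lambda_h,u_h,\bfp^*)/\|u-u_h\|_a \le \sqrt{1+C_1\eta_a(h)}$. Since $\eta_a(h)\to 0$ as $h\to0$ — its first summand $\|u-u_h\|_a\to0$ by convergence and its supremum/infimum summand $\to0$ by applying the approximation property (\ref{approximation_fem}) to the range of the operator $T$ — both $\theta_1$ and $\theta_2$ tend to $1$, and the squeeze theorem delivers (\ref{Exactness}).
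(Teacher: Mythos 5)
Your overall strategy coincides with the paper's proof: the lower bound in (\ref{Efficiency}) is read off directly from (\ref{Upper_Bound}); the upper bound is obtained by playing the competitor $\bfp=\nabla u$ (admissible in $\bfW$ because the strong equation gives $\dive\nabla u=\Delta u=Wu+\zeta|u|^2u-\lambda u\in L^2(\Omega)$) against the minimality of $\bfp^*$ from Lemma \ref{Optimization_Property_Lemma}; the residual is then made higher order via Lemma \ref{lemma:Maday}; and (\ref{Exactness}) follows by squeezing between $\theta_1$ and $\theta_2$ as $\eta_a(h)\to0$. All of this matches the paper step for step.

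The one place you genuinely deviate is the cubic term, and there your argument has a hole that the paper's does not. Your pointwise Lipschitz bound
\begin{equation*}
\||u|^2u-|u_h|^2u_h\|_0\le C\big(\|u\|_{L^\infty(\Omega)}^2+\|u_h\|_{L^\infty(\Omega)}^2\big)\|u-u_h\|_0
\end{equation*}
requires a uniform-in-$h$ maximum-norm bound on the discrete ground states $u_h$. You flag this as an assumption, but nothing in the setting (general conforming spaces on general, possibly adaptive meshes; only $H^1$-type control from Lemma \ref{lemma:Maday}) provides it: $L^\infty$ boundedness of $u_h$ does not follow from energy-norm convergence in dimension $d\ge 2$, and maximum-norm finite element estimates need mesh restrictions the paper never imposes. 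The paper's proof shows the assumption is unnecessary: write $|u|^2u-|u_h|^2u_h=(|u|^2+uu_h+|u_h|^2)(u-u_h)$ and apply the asymmetric H\"older estimate
\begin{equation*}
\int_\Omega f^2g^2\,{\rm d}\Omega\le\Big(\int_\Omega f^6\,{\rm d}\Omega\Big)^{1/3}\Big(\int_\Omega g^6\,{\rm d}\Omega\Big)^{1/6}\Big(\int_\Omega g^2\,{\rm d}\Omega\Big)^{1/2}
\end{equation*}
with $f=|u|^2+uu_h+|u_h|^2$ and $g=u-u_h$: only one factor of $u-u_h$ lands in $L^2$, where Lemma \ref{lemma:Maday} supplies the extra factor $\eta_a(h)$; the other lands in $L^6$, which is controlled by $\|u-u_h\|_a$ through the Sobolev embedding; and $f$ needs only $u,u_h\in L^{12}(\Omega)$, a Sobolev-type bound rather than a pointwise one (immediate for $d=2$; for $d=3$ this is admittedly where the paper's own estimate also demands more than $H^1$ control). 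Substituting this estimate for your Lipschitz step, and keeping everything else as you have it, closes the gap and reproduces the paper's argument; the remaining difference, your inequality $\|\nabla(u-u_h)\|_0^2\le\|u-u_h\|_a^2$ versus the paper's identity $\|\nabla(u-u_h)\|_0^2=\|u-u_h\|_a^2-\|u-u_h\|_0^2$, is immaterial.
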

\begin{proof}
The left-hand side inequality of (\ref{Efficiency}) is a direct conclusion of (\ref{Upper_Bound}). Next, we prove the
right-hand side one of (\ref{Efficiency}).

From the definition (\ref{Definition_Eta}), the GPE (\ref{GPEweakform})
and $\nabla u\in\mathbf W$, we have
\begin{eqnarray}\label{Inequality_4}
\eta^2(\lambda_h,u_h,\nabla u) &=& \|\lambda_hu_h - Wu_h - \zeta|u_h|^2 u_h- (\lambda u - Wu - \zeta|u |^2 u)\|_0^2\nonumber\\
& & \quad\quad ~+ \|\nabla u - \nabla u_h\|_0^2.
\end{eqnarray}
Then combining (\ref{Optimization_Property}), (\ref{Inequality_4}) and Lemma \ref{lemma:Maday},
 the following estimates hold
\begin{eqnarray}\label{Inequality_3}
&&\eta^2(\lambda_h,u_h,\bfp^*)\leq \eta^2(\lambda_h,u_h,\nabla u)\nonumber\\
&=&\|\lambda_hu_h - Wu_h - \zeta|u_h|^2 u_h- (\lambda u - Wu - \zeta|u |^2 u)\|_0^2\nonumber\\
& &\ \ ~ + \|\nabla u - \nabla u_h\|_0^2\nonumber\\
&=&\|\lambda_hu_h - Wu_h - \zeta|u_h|^2 u_h- (\lambda u - Wu - \zeta|u |^2 u)\|_0^2\nonumber\\
& &\ \ ~ + \|u-u_h\|_a^2 - \|u-u_h\|_0^2\nonumber\\
&=&2\|(\lambda_h-\lambda)u_h - (\lambda-W)(u_h-u)\|_0^2 + 2\|\zeta|u_h|^2u_h - \zeta|u|^2u\|_0^2\nonumber\\
& &\ \ ~ + \|u-u_h\|_a^2 - \|u-u_h\|_0^2\nonumber\\
&\leq&4|\lambda_h-\lambda|^2\|u_h\|_0^2 + 4\|\lambda-W\|_{L^{\infty}}^2\|u_h-u\|_0^2
 + 2\|\zeta(|u_h|^2 + uu_h +|u|^2)(u_h-u)\|_0^2\nonumber\\
& &\ \ ~ + \|u-u_h\|_a^2 - \|u-u_h\|_0^2\nonumber\\
&\leq&4|\lambda_h-\lambda|^2\|u_h\|_0^2 + 4\|\lambda-W\|_{L^{\infty}}^2\|u_h-u\|_0^2 \nonumber\\
&& + 2|\zeta|^2\left(\int_\Omega(|u_h|^2 + uu_h +|u|^2)^6{\rm d}\Omega\right)^{1/3}\left(\int_\Omega(u_h-u)^6{\rm d}\Omega\right)^{1/6}\left(\int_\Omega(u_h-u)^2{\rm d}\Omega\right)^{1/2}\nonumber\\
& &\ \ ~ + \|u-u_h\|_a^2 - \|u-u_h\|_0^2\nonumber\\
&\leq&4|\lambda_h-\lambda|^2\|u_h\|_0^2 + 4\|\lambda-W\|_{L^{\infty}}^2\|u_h-u\|_0^2 \nonumber\\
& &\ \ ~ + 4|\zeta|^2\left(\|u_h\|_{L^{12}(\Omega)} + \|u\|_{L^{12}(\Omega)}\right)\|u_h - u\|_{L^6(\Omega)}\|u_h-u\|_0\nonumber\\
& &\ \ ~ + \|u-u_h\|_a^2 - \|u-u_h\|_0^2\nonumber\\
&\leq&4|\lambda_h-\lambda|^2\|u_h\|_0^2 + 4\|\lambda-W\|_{L^{\infty}}^2\|u_h-u\|_0^2 \nonumber\\
& &\ \ ~ + 4C|\zeta|^2\left(\|u_h\|_a + \|u\|_a\right)\|u_h - u\|_a\|u_h-u\|_0\nonumber\\
& &\ \ ~ + \|u-u_h\|_a^2 - \|u-u_h\|_0^2\nonumber\\
&\leq&\big(1+C_1\eta_a(h)\big)\|u-u_h\|_a^2,
\end{eqnarray}
where $C_1 = 4(C_u^2\eta_a(h)\|u_h\|_0^2+C_u^2(\|\lambda-W\|_{L^{\infty}}^2-1)\eta_a(h)
 + C_uC|\zeta|^2(\|u_h\|_a + \|u\|_a)) $.

The inequality (\ref{Inequality_3}) leads to the right-hand side inequality of (\ref{Efficiency}).
Hence, the desired result (\ref{Exactness}) can be deduced easily from the
fact that $\delta_h(u)\rightarrow 0$ and $\eta_a(h)\rightarrow 0$ as $h\rightarrow 0$.
\end{proof}

\begin{corollary}
Assume the conditions of Theorem \ref{Efficiency_Theorem} holds and there exist a constant such that
$|||\bfp^*-\bfp_h\||_*\leq \widehat{C}\|u-u_h\|_a$. Then the following efficiency holds
\begin{eqnarray}\label{Efficiency_2}
\eta(\lambda_h,u_h,\bfp_h)&\leq&C_2\|u-u_h\|_a,
\end{eqnarray}
where $C_2$ is a constant defined by $C_2:=\sqrt{\theta_2^2+\widehat{C}^2}$. 
\end{corollary}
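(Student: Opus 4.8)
The plan is to exploit the orthogonality identity of Lemma \ref{Optimization_Property_Lemma}, which splits the estimator evaluated at any $\bfp\in\bfW$ into a best-approximation part at $\bfp^*$ and a distance-to-optimum part measured in the $|||\cdot|||_*$ norm. Applying (\ref{Optimization_Property}) with the computable choice $\bfp=\bfp_h$ gives the exact decomposition
\[
\eta^2(\lambda_h,u_h,\bfp_h)=\eta^2(\lambda_h,u_h,\bfp^*)+|||\bfp^*-\bfp_h|||_*^2,
\]
so the task reduces to bounding the two summands on the right-hand side separately by multiples of $\|u-u_h\|_a^2$.

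For the first summand I would invoke the right-hand inequality of (\ref{Efficiency}) from Theorem \ref{Efficiency_Theorem}, namely $\eta(\lambda_h,u_h,\bfp^*)\leq\theta_2\|u-u_h\|_a$, and square it to obtain $\eta^2(\lambda_h,u_h,\bfp^*)\leq\theta_2^2\|u-u_h\|_a^2$. For the second summand I would use the standing hypothesis of the corollary, $|||\bfp^*-\bfp_h|||_*\leq\widehat{C}\|u-u_h\|_a$, squared to give $|||\bfp^*-\bfp_h|||_*^2\leq\widehat{C}^2\|u-u_h\|_a^2$. Adding the two bounds and taking the square root then yields
\[
\eta(\lambda_h,u_h,\bfp_h)\leq\sqrt{\theta_2^2+\widehat{C}^2}\,\|u-u_h\|_a=C_2\|u-u_h\|_a,
\]
which is precisely (\ref{Efficiency_2}).

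There is no genuinely hard step here: once Lemma \ref{Optimization_Property_Lemma} is in hand, the argument is a one-line Pythagorean combination. The only point requiring care is that $\bfp_h$ must lie in $\bfW=H(\dive;\Omega)$ so that both the identity (\ref{Optimization_Property}) and the norm $|||\cdot|||_*$ are meaningful when evaluated at it; this is guaranteed by taking $\bfp_h$ to be an approximate solution of the dual problem (\ref{Dual_Problem}) posed in $\bfW$. I would also emphasize that $\widehat{C}$ is an approximation-quality assumption on the discrete dual solver rather than a constant derived here, so the resulting efficiency estimate is conditional on that solver being quasi-optimal in the $|||\cdot|||_*$ norm.
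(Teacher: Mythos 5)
Your proof is correct and follows exactly the paper's own argument: apply the Pythagorean identity (\ref{Optimization_Property}) with $\bfp=\bfp_h$, bound $\eta^2(\lambda_h,u_h,\bfp^*)$ by $\theta_2^2\|u-u_h\|_a^2$ via (\ref{Efficiency}) and $|||\bfp^*-\bfp_h|||_*^2$ by $\widehat{C}^2\|u-u_h\|_a^2$ via the hypothesis, then sum and take the square root. Your closing remarks on $\bfp_h\in\bfW$ and on $\widehat{C}$ being a quasi-optimality assumption on the discrete dual solver are sound observations, though not needed beyond what the paper does.
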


\begin{proof}
First from (\ref{Optimization_Property}) and (\ref{Efficiency}), we have
\begin{eqnarray}
\eta^2(\lambda_h,u_h,\bfp_h)&=&\eta^2(\lambda_h,u_h,\bfp^*)+|||\bfp^*-\bfp_h|||_*^2\nonumber\\
&\leq&\theta_2^2\|u-u_h\|_a^2+\widehat{C}^2\|u-u_h\|_a^2\nonumber\\
&\leq&(\theta_2^2+\widehat{C}^2)\|u-u_h\|_a^2.
\end{eqnarray}
Then the desired result (\ref{Efficiency_2}) can be obtained 
and the proof is complete.
\end{proof}

\section{Asymptotic lower bound of the first eigenvalue}\label{Section_Lower_Bound}
In this section, based on the upper bound for the error estimate of the first eigenfunction approximation,
we give an asymptotic lower bound of the smallest eigenvalue. Actually, the process is direct since we have
the following Rayleigh quotient expansion. 
\begin{lemma}\label{RQ}
Assume $(\lambda,u)\in\mathbb{R}\times V$ is the eigenpair of the original problem (\ref{GPEweakform}),
$(\lambda_h,u_h)\in\mathbb{R}\times V_h$ is the
eigenpair of the discrete problem (\ref{GPEfem}). We have the following expansion:
\begin{eqnarray}\label{RQexpansion}
\lambda_h - \lambda &=& \frac{a(u_h-u,u_h-u) - \lambda b(u_h-u,u_h-u)
+ \int_\Omega\big((W-1)(u_h-u)^2d\Omega}{b(u_h,u_h)}\nonumber\\
 & & \ \ ~+\frac{\int_{\Omega}\zeta(|u|^2u - |u|^2u_h - |u_h|^2u - |u_h|^2u_h)(u_h-u)d\Omega}{b(u_h,u_h)}.
\end{eqnarray}
\end{lemma}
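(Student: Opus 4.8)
The natural route is a direct Rayleigh quotient computation; no machinery beyond the two quotient identities (\ref{RQ_u}) and (\ref{RQ_uh}) and the weak form (\ref{GPEweakform}) is needed. The plan is to use $b(u_h,u_h)$ as a common denominator and write
$$\lambda_h - \lambda = \frac{\hat{a}(u_h,u_h) - \lambda\, b(u_h,u_h)}{b(u_h,u_h)},$$
and then to expand the numerator around the exact eigenfunction by setting $e := u_h - u$. The whole statement is an identity, so the work is entirely algebraic bookkeeping of this numerator.

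To organise the expansion I would split $\hat{a}$ into its genuinely bilinear part $\tilde a(v,w) := a(v,w) + \int_\Omega (W-1)vw\,d\Omega$ and the cubic part $\zeta\int_\Omega |v|^2 v w\,d\Omega$. Since $\tilde a$ and $b$ are symmetric bilinear forms they expand exactly, $\tilde a(u_h,u_h) = \tilde a(u,u) + 2\tilde a(u,e) + \tilde a(e,e)$ and likewise for $b$. The idea is then to eliminate the zeroth- and first-order terms in $e$ using the continuous equation: testing (\ref{GPEweakform}) with $v=u$ yields $\tilde a(u,u) - \lambda b(u,u) = -\zeta\int_\Omega |u|^4\,d\Omega$, and testing with $v=e$ yields $\tilde a(u,e) - \lambda b(u,e) = -\zeta\int_\Omega |u|^2 u\, e\,d\Omega$. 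Substituting these into the numerator leaves the symmetric quadratic remainder $\tilde a(e,e) - \lambda b(e,e)$, which is precisely $a(e,e) - \lambda b(e,e) + \int_\Omega (W-1)e^2\,d\Omega$, i.e. exactly the first three terms of (\ref{RQexpansion}).

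It then remains to collect all the leftover cubic contributions, namely $\zeta\big(\int_\Omega |u_h|^4\,d\Omega - \int_\Omega |u|^4\,d\Omega - 2\int_\Omega |u|^2 u\, e\,d\Omega\big)$, and to rewrite this single quantity as the integral $\int_\Omega \zeta(\cdots)(u_h - u)\,d\Omega$ appearing in (\ref{RQexpansion}). This is the only genuinely nonlinear step, and the place where I expect the real work: unlike the linear case the cubic term does not expand bilinearly. I would expand $|u_h|^4 = (u+e)^4$ pointwise, observe that the resulting quartic difference vanishes to first order in $e$ and hence factors as $(u_h-u)$ times a cubic polynomial in $u$ and $u_h$, and then identify that polynomial to match the integrand in (\ref{RQexpansion}). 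The main obstacle is nothing conceptual but the careful sign and term tracking in this factorization; once it is carried out, dividing through by $b(u_h,u_h)$ delivers (\ref{RQexpansion}).
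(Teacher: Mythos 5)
Your route is the same as the paper's: both start from $\lambda_h-\lambda=\big(\hat a(u_h,u_h)-\lambda b(u_h,u_h)\big)/b(u_h,u_h)$ and eliminate the zeroth- and first-order terms with the continuous weak form; your variant (splitting $\hat a$ into the bilinear part $\tilde a$ plus the cubic part, and testing (\ref{GPEweakform}) with $u$ and $e=u_h-u$) is only a reorganization of the paper's expansion $a(u_h,u_h)=a(u_h-u,u_h-u)+2a(u,u_h)-a(u,u)$ with the weak form tested by $u_h$ and $u$. Up to and including the identification of the quadratic remainder $\tilde a(e,e)-\lambda b(e,e)$ and of the cubic leftover $\zeta\big(\int_\Omega u_h^4\,d\Omega-\int_\Omega u^4\,d\Omega-2\int_\Omega u^3e\,d\Omega\big)$, your computation is correct.

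The problem is the very last step, which you assert rather than carry out: the leftover does factor through $e$, but the factorization is
\[
u^4+u_h^4-2u^3u_h=(u-u_h)\big(|u|^2u-|u|^2u_h-|u_h|^2u-|u_h|^2u_h\big),
\]
i.e.\ the factor is $(u-u_h)$, not $(u_h-u)$, so what comes out is the \emph{negative} of the term appearing in (\ref{RQexpansion}). (Check: the lemma's integrand expands pointwise to $2u^3u_h-u^4-u_h^4$, whereas your derivation, like mine, produces $u^4+u_h^4-2u^3u_h$.) Hence the promised ``matching'' cannot be completed; a careful execution of your plan proves the identity with the cubic term $\int_\Omega\zeta(|u|^2u-|u|^2u_h-|u_h|^2u-|u_h|^2u_h)(u-u_h)\,d\Omega$, which means the lemma as printed carries a sign error in its nonlinear term. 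The paper's own proof makes exactly this slip in its final display, where the collected cubic terms $\zeta\int_\Omega(u^4+u_h^4-2u^3u_h)\,d\Omega$ are silently replaced by their negative. The error is harmless downstream, since the proof of Theorem \ref{Lower_bound_Lambda} only uses this term through a Cauchy--Schwarz bound on its absolute value, but as an identity (\ref{RQexpansion}) fails whenever $\int_\Omega(u^4+u_h^4-2u^3u_h)\,d\Omega\neq0$, and your unverified matching step is precisely where this would have surfaced.
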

\begin{proof}
From (\ref{GPEweakform}), (\ref{RQ_u}), (\ref{GPEfem}), (\ref{RQ_uh}), and direct computations, we have
\begin{eqnarray*}
& & \lambda_h - \lambda = \frac{\hat{a}(u_h,u_h) - \lambda b(u_h,u_h)}{b(u_h,u_h)}\\
&=& \frac{a(u_h,u_h) + \int_\Omega\big((W-1)u_hu_h + \zeta|u_h|^2 u_hu_h\big)d\Omega - \lambda b(u_h,u_h)}{b(u_h,u_h)}\\
&=& \frac{a(u_h-u,u_h-u) + 2a(u,u_h) - a(u,u) }{b(u_h,u_h)}\\
& & \ \ +~ \frac{\int_\Omega\big((W-1)u_hu_h + \zeta|u_h|^2 u_hu_h\big)d\Omega - \lambda b(u_h,u_h)}{b(u_h,u_h)}\\
&=& \frac{a(u_h-u,u_h-u) + 2\lambda b(u,u_h) - 2\int_{\Omega}\big((W-1)uu_h + \zeta|u|^2uu_h\big)d\Omega}{b(u_h,u_h)}\\
& & \ \ +~ \frac{-\lambda b(u,u) + \int_{\Omega}\big((W-1)uu + \zeta|u|^2uu\big)d\Omega}{b(u_h,u_h)}\\
& & \ \ +~ \frac{\int_\Omega\big((W-1)u_hu_h + \zeta|u_h|^2 u_hu_h\big)d\Omega - \lambda b(u_h,u_h)}{b(u_h,u_h)}\\
&=& \frac{a(u_h-u,u_h-u) - \lambda b(u_h-u,u_h-u) + \int_\Omega\big((W-1)(u - u_h)^2d\Omega }{b(u_h,u_h)}\\
& & \ \ ~+\frac{ \int_{\Omega}\zeta(|u|^2u_h + |u_h|^2u + |u_h|^2u_h - |u|^2u)(u-u_h)d\Omega}{b(u_h,u_h)}.
\end{eqnarray*}
This is the desired result (\ref{RQexpansion}) and the proof is complete.
\end{proof}

\begin{theorem}\label{Lower_bound_Lambda}
Assume the conditions of Lemma \ref{RQ} and the normalization condition $b(u_h,u_h) = 1$ hold.
Then we have the following error estimate:
\begin{eqnarray}\label{RQ_inequality}
\lambda_h - \lambda \leq \frac{C_u\delta_h(u) + C_3\eta_a(h)}{1-\alpha\eta_a(h)}\eta(\lambda_h,u_h,\bfp_h),
\end{eqnarray}
where $C_3 = \big(\|W-1\|_{L^{\infty}(\Omega)}\|u_h-u\|_0+ C|\zeta|(\|u\|^3_a + \|u_h\|^2_a\|u\|_a+\|u_h\|^3_a+ \|u_h\|^2_a\|u_h\|_a)\big)C_u$.

Moreover, if $h$ is small enough such that $\frac{C_u\delta_h(u) + C_3\eta_a(h)}{1-\alpha\eta_a(h)}\leq 1$,
the following explicit and asymptotic result holds
\begin{equation}\label{Asymptotic_lower_Bound}
\lambda_h^L:=\lambda_h-\eta(\lambda_h,u_h,\bfp_h) \leq \lambda,
\end{equation}
where $\lambda_h^L$ denotes an asymptotic lower bound of the first eigenvalue $\lambda$.
\end{theorem}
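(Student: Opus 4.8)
The plan is to begin from the Rayleigh quotient expansion of Lemma \ref{RQ}, impose the normalization $b(u_h,u_h)=1$ so that the denominator drops out, and then convert the resulting identity into a one-sided estimate by discarding the single term carrying a favourable sign. Concretely, with $b(u_h,u_h)=1$ the expansion becomes
\begin{eqnarray*}
\lambda_h-\lambda &=& \|u-u_h\|_a^2 - \lambda\|u-u_h\|_0^2 + \int_\Omega(W-1)(u-u_h)^2 d\Omega \\
& & \quad + \int_\Omega\zeta\big(|u|^2u_h + |u_h|^2u + |u_h|^2u_h - |u|^2u\big)(u-u_h)d\Omega.
\end{eqnarray*}
Because the ground state eigenvalue satisfies $\lambda = \int_\Omega(|\nabla u|^2 + W|u|^2 + \zeta|u|^4)d\Omega / \int_\Omega|u|^2 d\Omega > 0$ (the shift by $\pm1$ in $\hat a$ and $a$ cancels, and $W\ge0$, $\zeta>0$), the term $-\lambda\|u-u_h\|_0^2\le0$ may be dropped, giving an upper bound for $\lambda_h-\lambda$. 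This sign observation is precisely what turns the identity into a lower bound for $\lambda$ and is the conceptual heart of the statement.

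The second step is to bound each of the three surviving terms by a constant times $\|u-u_h\|_a$, so that the computable upper bound of Corollary \ref{Upper_Bound_Computable_corollary} can be invoked at the end. For the leading term I would write $\|u-u_h\|_a^2 = \|u-u_h\|_a\cdot\|u-u_h\|_a$ and apply $\|u-u_h\|_a\le C_u\delta_h(u)$ from Lemma \ref{lemma:Maday} to one factor; this produces the $C_u\delta_h(u)$ contribution to the coefficient. For the potential term I would use $\int_\Omega(W-1)(u-u_h)^2 d\Omega \le \|W-1\|_{L^\infty(\Omega)}\|u-u_h\|_0^2$ and then replace one factor $\|u-u_h\|_0$ via the $L^2$ estimate $\|u-u_h\|_0\le C_u\eta_a(h)\|u-u_h\|_a$ of Lemma \ref{lemma:Maday}, which accounts for the $\|W-1\|_{L^\infty(\Omega)}\|u_h-u\|_0$ piece of $C_3$ together with the accompanying factor $\eta_a(h)$.

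The third step treats the cubic term. After a Cauchy--Schwarz splitting into $(\sum\|\cdot\|_0)\|u-u_h\|_0$, I would bound each of the four cubic factors $|u|^2u$, $|u|^2u_h$, $|u_h|^2u$, $|u_h|^2u_h$ in $L^2$ by a product of three $L^6$ norms through H\"older and then pass to $\|\cdot\|_a$ using the Sobolev embedding $H^1(\Omega)\hookrightarrow L^6(\Omega)$ (valid for $d\le3$), exactly as was done in the proof of Theorem \ref{Efficiency_Theorem}; this yields the factor $C|\zeta|(\|u\|_a^3+\|u_h\|_a^2\|u\|_a+\|u_h\|_a^3+\cdots)$. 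Replacing the remaining $\|u-u_h\|_0$ once more by $C_u\eta_a(h)\|u-u_h\|_a$ supplies the matching $\eta_a(h)$ and completes the $C_3$ term. Collecting the three contributions gives $\lambda_h-\lambda\le\big(C_u\delta_h(u)+C_3\eta_a(h)\big)\|u-u_h\|_a$, and substituting $\|u-u_h\|_a\le\frac{1}{1-\alpha\eta_a(h)}\eta(\lambda_h,u_h,\bfp_h)$ from Corollary \ref{Upper_Bound_Computable_corollary} yields (\ref{RQ_inequality}). The asymptotic lower bound (\ref{Asymptotic_lower_Bound}) is then immediate: once $h$ is small enough that the coefficient is $\le1$, inequality (\ref{RQ_inequality}) forces $\lambda_h-\lambda\le\eta(\lambda_h,u_h,\bfp_h)$, i.e. $\lambda_h-\eta(\lambda_h,u_h,\bfp_h)\le\lambda$. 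The main obstacle I anticipate is purely the bookkeeping in the cubic term, namely tracking which powers of $\|u\|_a$ and $\|u_h\|_a$ appear and ensuring every surplus factor of the error is measured in the $L^2$ norm so that it can be absorbed into $\eta_a(h)$; the only genuinely one-sided step is the positivity of $\lambda$ that licenses dropping $-\lambda\|u-u_h\|_0^2$.
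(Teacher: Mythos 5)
Your proposal is correct and follows essentially the same route as the paper's own proof: start from the expansion of Lemma \ref{RQ} with $b(u_h,u_h)=1$, drop the term $-\lambda\|u-u_h\|_0^2$ (the paper does this silently; you supply the explicit positivity argument for $\lambda$), bound the remaining terms using $\|u-u_h\|_a\leq C_u\delta_h(u)$, $\|u-u_h\|_0\leq C_u\eta_a(h)\|u-u_h\|_a$, H\"older with the embedding $H^1(\Omega)\hookrightarrow L^6(\Omega)$ for the cubic term, and finish with Corollary \ref{Upper_Bound_Computable_corollary}. The bookkeeping you describe reproduces the constant $C_3$ exactly as in the paper, so there is no gap.
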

\begin{proof}
From (\ref{RQexpansion}) and $b(u_h,u_h) = 1$, we have the following estimates
\begin{eqnarray}\label{RQE_estimate1}
\lambda_h - \lambda &\leq& \|u_h-u\|_a^2 - \lambda \|u_h-u\|_0^2 + \|W-1\|_{L^{\infty}(\Omega)}\|u_h-u\|_0^2\nonumber\\
& & \ \ ~+\|\zeta(|u|^2 u - |u_h|^2u - |u_h|^2u_h - |u|^2u_h)\|_0\|u_h - u\|_0\nonumber\\
&\leq& \|u_h-u\|_a^2 - \lambda \|u_h-u\|_0^2 + \|W-1\|_{L^{\infty}(\Omega)}\|u_h-u\|_0^2\nonumber\\
& & \ \ ~+|\zeta|\big(\|u\|^3_{L^6(\Omega)} + \|u_h\|^2_{L^6(\Omega)}\|u\|_{L^6(\Omega)}+\|u_h\|^3_{L^6(\Omega)}\nonumber\\
& & \ \ \ ~ + \|u_h\|^2_{L^6(\Omega)}\|u_h\|_{L^6(\Omega)}\big)\|u_h - u\|_0\nonumber\\
&\leq& \|u_h-u\|_a^2 - \lambda \|u_h-u\|_0^2 + \|W-1\|_{L^{\infty}(\Omega)}\|u_h-u\|_0^2\nonumber\\
& & \ \ ~+C|\zeta|\big(\|u\|^3_a + \|u_h\|^2_a\|u\|_a+\|u_h\|^3_a\nonumber\\
& & \ \ \ ~ + \|u_h\|^2_a\|u_h\|_a\big)\|u_h - u\|_0.
\end{eqnarray}
Then using Lemma \ref{lemma:Maday}, (\ref{Upper_Bound_Computable}) and (\ref{RQE_estimate1}), we have
\begin{eqnarray}
\lambda_h - \lambda &\leq& \|u_h-u\|_a^2 + \big(\|W-1\|_{L^{\infty}(\Omega)}\|u_h-u\|_0\nonumber\\
& & \ \ ~+ C|\zeta|\big(\|u\|^3_a + \|u_h\|^2_a\|u\|_a+\|u_h\|^3_a+ \|u_h\|^2_a\|u_h\|_a\big)\|u_h - u\|_0\nonumber\\
&\leq& \big(C_u\delta_h(u) + C_3\eta_a(h)\big)\|u_h - u\|_a\nonumber\\
&\leq&\frac{C_u\delta_h(u) + C_3\eta_a(h)}{1-\alpha\eta_a(h)}\eta(\lambda_h,u_h,\bfp_*)\nonumber\\
&\leq& \frac{C_u\delta_h(u) + C_3\eta_a(h)}{1-\alpha\eta_a(h)}\eta(\lambda_h,u_h,\bfp_h).
\end{eqnarray}
This is the desired result (\ref{RQ_inequality}) and the result (\ref{Asymptotic_lower_Bound}) can
be derived easily.
\end{proof}

\begin{corollary}
Assume the conditions of Theorem \ref{Lower_bound_Lambda} holds. Then we have the following error estimate:
\begin{eqnarray}\label{Energy_inequality}
E_h - E \leq \frac{C_u\delta_h(u) + C_4\eta_a(h)}{1-\alpha\eta_a(h)}\eta(\lambda_h,u_h,\bfp_h),
\end{eqnarray}
where $C_4 = C_3+\frac{C\zeta}{2(1-\alpha\eta_a(h))}(\|u\|_a+\|u_h\|_a)^3$.

Moreover, if $h$ is small enough such that $\frac{C_u\delta_h(u) + C_4\eta_a(h)}{1-\alpha\eta_a(h)}\leq 1$,
the following explicit and asymptotic result holds
\begin{equation}\label{Asymptotic_lower_Bound_Energy}
E_h^L:=E_h-\eta(\lambda_h,u_h,\bfp_h) \leq E,
\end{equation}
where $E_h^L$ denotes an asymptotic lower bound of the ground state energy $E$.
\end{corollary}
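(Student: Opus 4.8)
The plan is to derive the energy error $E_h - E$ by comparing the two defining expressions for the ground state energies, namely $E = \lambda - \int_\Omega \frac{\zeta}{2}|u|^4 d\Omega$ from (\ref{Energy_Eq}) and $E_h = \lambda_h - \int_\Omega \frac{\zeta}{2}|u_h|^4 d\Omega$ from (\ref{Energy_Eq_FEM}). Subtracting these gives
\begin{eqnarray*}
E_h - E = (\lambda_h - \lambda) + \frac{\zeta}{2}\int_\Omega \big(|u|^4 - |u_h|^4\big)d\Omega.
\end{eqnarray*}
The first term on the right is already controlled by Theorem \ref{Lower_bound_Lambda}, so the only new work is to bound the quartic term $\frac{\zeta}{2}\int_\Omega(|u|^4 - |u_h|^4)d\Omega$ in terms of $\|u-u_h\|_a$ and then feed in the computable upper bound (\ref{Upper_Bound_Computable}).

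First I would factor the quartic difference as $|u|^4 - |u_h|^4 = (|u|^2 + |u_h|^2)(|u|^2 - |u_h|^2) = (|u|^2 + |u_h|^2)(u+u_h)(u-u_h)$, which exposes a single factor of $(u - u_h)$ suitable for a norm estimate. Applying the Cauchy-Schwarz inequality and then a generalized Hölder inequality with exponents chosen so that the cubic prefactor lands in $L^6$-type norms (exactly as in the efficiency argument of Theorem \ref{Efficiency_Theorem} and in the estimate (\ref{RQE_estimate1}) of Theorem \ref{Lower_bound_Lambda}), I expect to obtain a bound of the form
\begin{eqnarray*}
\frac{\zeta}{2}\int_\Omega\big(|u|^4 - |u_h|^4\big)d\Omega \leq \frac{C\zeta}{2}\big(\|u\|_a + \|u_h\|_a\big)^3\|u - u_h\|_0,
\end{eqnarray*}
where the Sobolev embedding $H_0^1(\Omega)\hookrightarrow L^6(\Omega)$ is used to pass from $L^6$-norms to the $\|\cdot\|_a$-norm, absorbing the embedding constant into $C$. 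The cubic power $(\|u\|_a+\|u_h\|_a)^3$ matches precisely the shape of the constant $C_4 = C_3 + \frac{C\zeta}{2(1-\alpha\eta_a(h))}(\|u\|_a+\|u_h\|_a)^3$ announced in the statement, which is the structural hint confirming this is the intended route.

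Next I would combine this with the $L^0$-to-$L^a$ conversion $\|u-u_h\|_0 \leq C_u\eta_a(h)\|u-u_h\|_a$ from Lemma \ref{lemma:Maday}, so that the quartic contribution becomes a multiple of $\eta_a(h)\|u-u_h\|_a$. Adding this to the bound for $\lambda_h - \lambda$ from the proof of Theorem \ref{Lower_bound_Lambda}, the two $\eta_a(h)\|u-u_h\|_a$ contributions merge, upgrading the constant $C_3$ to $C_4$. A single application of the computable upper bound (\ref{Upper_Bound_Computable}), $\|u-u_h\|_a \leq \frac{1}{1-\alpha\eta_a(h)}\eta(\lambda_h,u_h,\bfp_h)$, then yields (\ref{Energy_inequality}). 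Finally, under the smallness hypothesis $\frac{C_u\delta_h(u)+C_4\eta_a(h)}{1-\alpha\eta_a(h)}\leq 1$, the coefficient in (\ref{Energy_inequality}) is at most one, giving $E_h - E \leq \eta(\lambda_h,u_h,\bfp_h)$, which rearranges immediately to the asymptotic lower bound (\ref{Asymptotic_lower_Bound_Energy}).

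The main obstacle I anticipate is the bookkeeping of the Hölder exponents and the power-counting in the cubic prefactor: one must verify that the factor $(|u|^2+|u_h|^2)(u+u_h)$, after Hölder splitting, assembles into exactly $(\|u\|_a+\|u_h\|_a)^3$ (rather than a mixed-degree expression), and that the extra factor of $\frac{1}{1-\alpha\eta_a(h)}$ appearing inside $C_4$ is correctly attributed to one application of the upper bound (\ref{Upper_Bound_Computable}) used to control $\|u-u_h\|_a$ when it multiplies the quartic term. Beyond this routine-but-delicate estimation, the argument is a direct assembly of results already established earlier in the excerpt.
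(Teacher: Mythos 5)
Your proposal is correct and follows essentially the same route as the paper's own proof: the same splitting $E_h - E = (\lambda_h-\lambda) + \frac{\zeta}{2}\int_\Omega(|u|^4-|u_h|^4)d\Omega$, the same factorization $(u-u_h)(u+u_h)(u^2+u_h^2)$ with H\"older exponents $(2,6,3)$ and Sobolev embedding giving $\frac{C\zeta}{2}\|u-u_h\|_0(\|u\|_a+\|u_h\|_a)^3$, and the same absorption via Lemma \ref{lemma:Maday} and (\ref{Upper_Bound_Computable}) to upgrade $C_3$ to $C_4$. No gaps; your bookkeeping of the $\frac{1}{1-\alpha\eta_a(h)}$ factor inside $C_4$ matches the paper's argument.
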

\begin{proof}
From (\ref{Energy_Eq}), (\ref{Energy_Eq_FEM}) and (\ref{RQ_inequality}), we have
\begin{eqnarray}\label{Energy_estimate1}
E_h - E &\leq& \lambda_h - \lambda + \int_{\Omega}\frac{\zeta}{2}(|u|^4 - |u_h|^4)d\Omega\nonumber\\
&\leq& \frac{C_u\delta_h(u) + C_3\eta_a(h)}{1-\alpha\eta_a(h)}\eta(\lambda_h,u_h,\bfp_h)\nonumber\\
& & \ \ ~+ \int_{\Omega}\frac{\zeta}{2}(u-u_h)(u+u_h)(u^2+u_h^2)d\Omega\nonumber\\
&\leq& \frac{C_u\delta_h(u) + C_3\eta_a(h)}{1-\alpha\eta_a(h)}\eta(\lambda_h,u_h,\bfp_h)\nonumber\\
&&+ \frac{\zeta}{2}\left(\int_{\Omega}(u-u_h)^2d\Omega\right)^{1/2}\left(\int_{\Omega}(u+u_h)^6d\Omega\right)^{1/6}
\left(\int_{\Omega}(u^2+u_h^2)^{3}d\Omega\right)^{1/3}\nonumber\\
&\leq& \frac{C_u\delta_h(u) + C_3\eta_a(h)}{1-\alpha\eta_a(h)}\eta(\lambda_h,u_h,\bfp_h)\nonumber\\
& & \ \ ~+ \frac{\zeta}{2}\|u-u_h\|_0(\|u\|_{L^6(\Omega)}+\|u_h\|_{L^6(\Omega)})(\|u^2\|_{L^3(\Omega)}+\|u_h^2\|_{L^3(\Omega)})\nonumber\\
&\leq& \frac{C_u\delta_h(u) + C_3\eta_a(h)}{1-\alpha\eta_a(h)}\eta(\lambda_h,u_h,\bfp_h)\nonumber\\
& & \ \ ~+ \frac{C\zeta}{2}\|u-u_h\|_0(\|u\|_a+\|u_h\|_a)^3.
\end{eqnarray}
Then using Lemma \ref{lemma:Maday}, (\ref{Upper_Bound_Computable}) and (\ref{Energy_estimate1}), we have
\begin{eqnarray}\label{Energy_estimate2}
E_h - E &\leq& \frac{C_u\delta_h(u) + C_3\eta_a(h)}{1-\alpha\eta_a(h)}\eta(\lambda_h,u_h,\bfp_h)\nonumber\\
& & \ \ ~+ \frac{C\zeta}{2}\|u-u_h\|_0(\|u\|_a+\|u_h\|_a)^3\nonumber\\
&\leq& \frac{C_u\delta_h(u) + C_4\eta_a(h)}{1-\alpha\eta_a(h)}\eta(\lambda_h,u_h,\bfp_h). 
\end{eqnarray}
This is the desired result (\ref{Energy_inequality}) and the result (\ref{Asymptotic_lower_Bound_Energy}) can
be derived easily.
\end{proof}
\section{Numerical examples}\label{Section_Numerical_Examples}
In this section, two numerical examples are presented to validate the efficiency of
the a posteriori estimate, the upper bound of the error estimate and lower bound of
the first eigenvalue proposed in this paper.

In order to give the a posteriori error estimate $\eta(\lambda_h, u_h, \mathbf p_h)$, we need to solve the
dual problem (\ref{Dual_Problem}). Here, the dual problem (\ref{Dual_Problem}) is solved using the same mesh $\mathcal{T}_h$
and the $H({\rm div};\Omega)$ conforming finite element space $\bfW_h$ is defined as follows \cite{BrezziFortin}
$$\bfW_h^p = \{\bfw\in\bfW:\bfw|_K\in{\rm RT}_p,\forall K\in \mathcal{T}_h\},$$
where ${\rm RT}_p = (\mathcal{P}_p)^d+\mathbf{x}\mathcal{P}_p$. Then the approximate solution $\bfp_h^p\in\bfW_h^p$ of the dual
problem (\ref{Dual_Problem}) is defined as follows: Find $\bfp_h^*\in\bfW_h^p$ such that
\begin{equation}\label{Dual_Problem_fem}
a^*(\bfp_h^*,\bfq_h) = \mathcal{F}(\bfq_h), \ \ \forall \bfq_h\in\bfw_h^p.
\end{equation}
After obtaining $\bfp_h^*$, we can compute the a posteriori error estimate $\eta(\lambda_h,u_h,\bfp_h^*)$ as
in (\ref{Definition_Eta}). Based on $\lambda_h$ and $\eta(\lambda_h,u_h,\bfp_h^*)$, we can obtain the asymptotic
 lower bound of the first eigenvalue $\lambda$ as follows
$$\lambda_h^L:=\lambda_h-\eta(\lambda_h,u_h,\bfp_h^*).$$

Futhermore, we can get an asymptotic lower bound of the ground state energy $E_h^L$ based on $E_h$ and $\eta(\lambda_h,u_h,\bfp_h^*)$:
$$E_h^L := E_h-\eta(\lambda_h,u_h,\bfp_h^*).$$

\begin{example}
In this example, we consider the ground state solution of GPE (\ref{GPEsymply})
for BEC with $\zeta = 1$, $W(x) = x_1^2+x_2^2$ and unit square $\Omega = (0,1)\times(0,1)$.
\end{example}

\begin{figure}[ht]
\centering
\includegraphics[width=6cm,height=5.5cm]{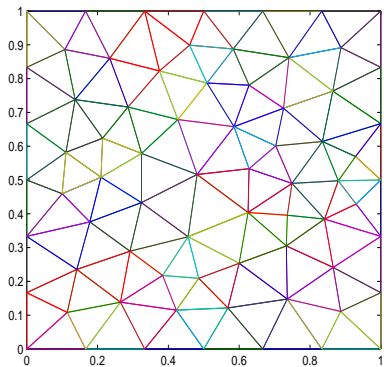}
\caption{\small\texttt The initial mesh for the unit square.}
\label{unit_square_init_mehs}
\end{figure}

In this example, the initial mesh $\mathcal{T}_{h_1}$ is showed in Figure \ref{unit_square_init_mehs} which
is generated by Delaunay method and the mesh size $h_1 = 1/6$. Then we produce a sequence of meshes
$\{\mathcal{T}_{h_i}\}_{i=2}^{6}$ which are obtained by the regular
refinement (connecting the midpoints of each edge) and whose mesh sizes are
$h_2 = 1/12,\ \cdots,\ h_6 = 1/192$. Based on this sequence of meshes, a sequence of
 linear conforming finite element space $\{V_{h_i}\}_{i=1}^{6}$
and $H(\dive;\Omega)$ conforming finite element space $\{\bfW_{h_i}^1\}_{i=1}^{6}$ are built.

First we solve the GPE problem (\ref{GPEweakform}) in $\{V_{h_i}\}_{i=1}^{6}$ and
the dual problem (\ref{Dual_Problem_fem}) in $\{\bfW_{h_i}^1\}_{i=1}^{6}$, respectively.
The corresponding numerical results are presented in Figure \ref{unit_square_eigenpair} which
shows that the a posteriori error estimate $\eta(\lambda_h,u_h,\bfp_h^*)$ is efficient
when we solve the dual problem in $\bfW_h^1$. In Figure \ref{unit_square_eigenpair},
we can find that the eigenvalue approximation $\lambda_h^L$ and ground state energy
approximation $E_h^L$ are really asymptotic lower bounds for the first eigenvalue $\lambda$
and ground state energy $E$, respectively.

\begin{figure}[ht]
\centering
\includegraphics[width=6cm,height=5.5cm]{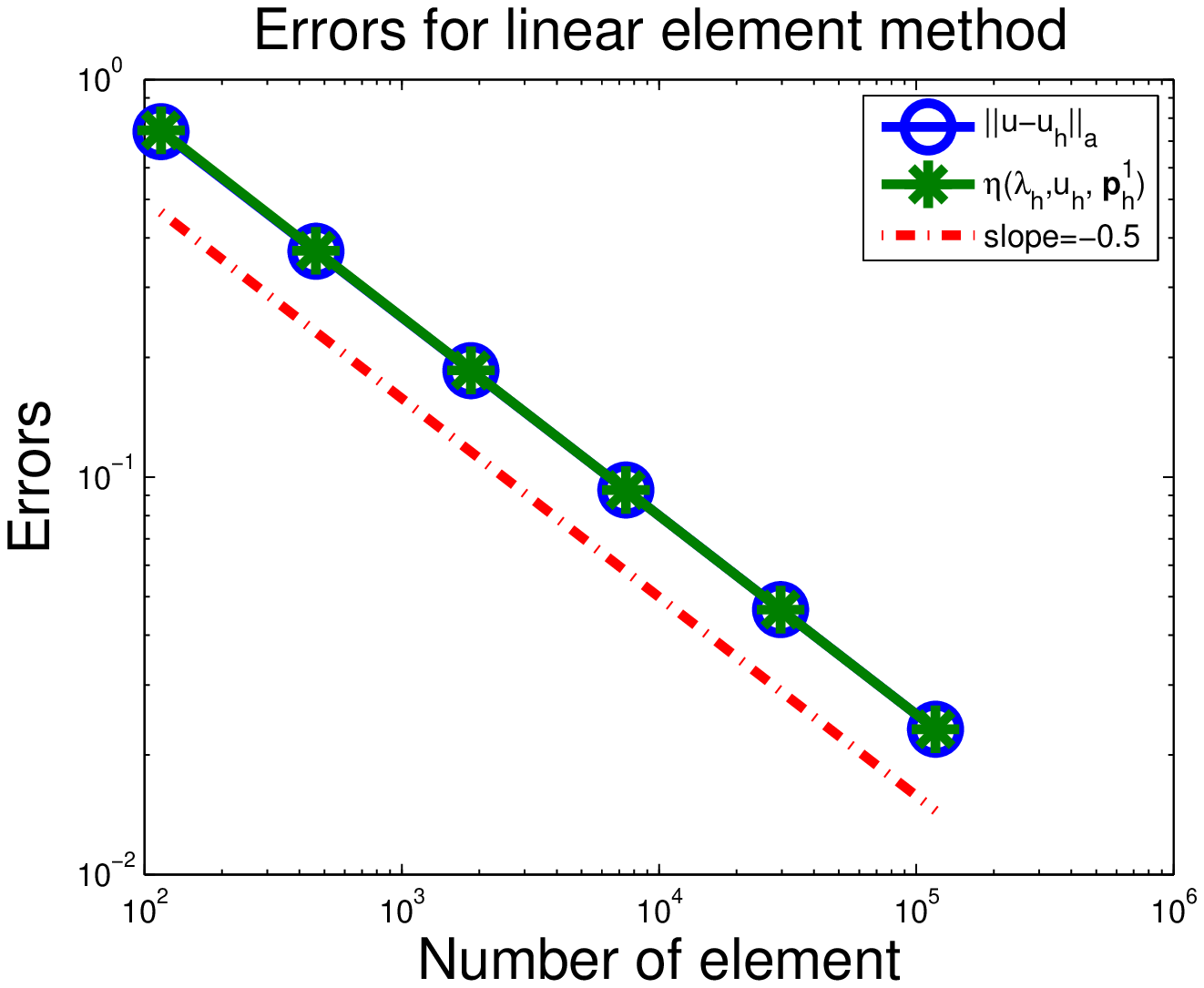}
\includegraphics[width=6cm,height=5.5cm]{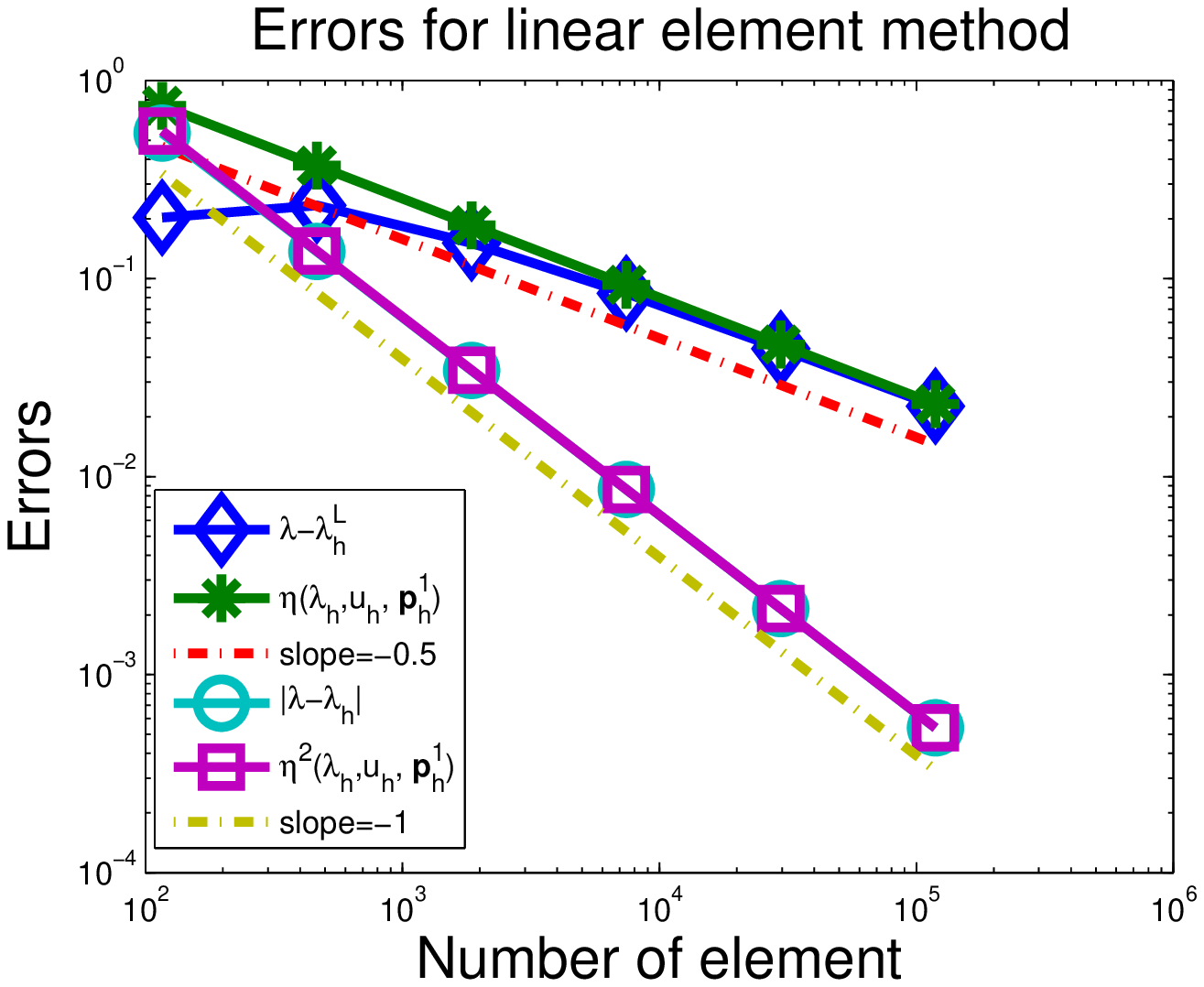}
\includegraphics[width=6cm,height=5.5cm]{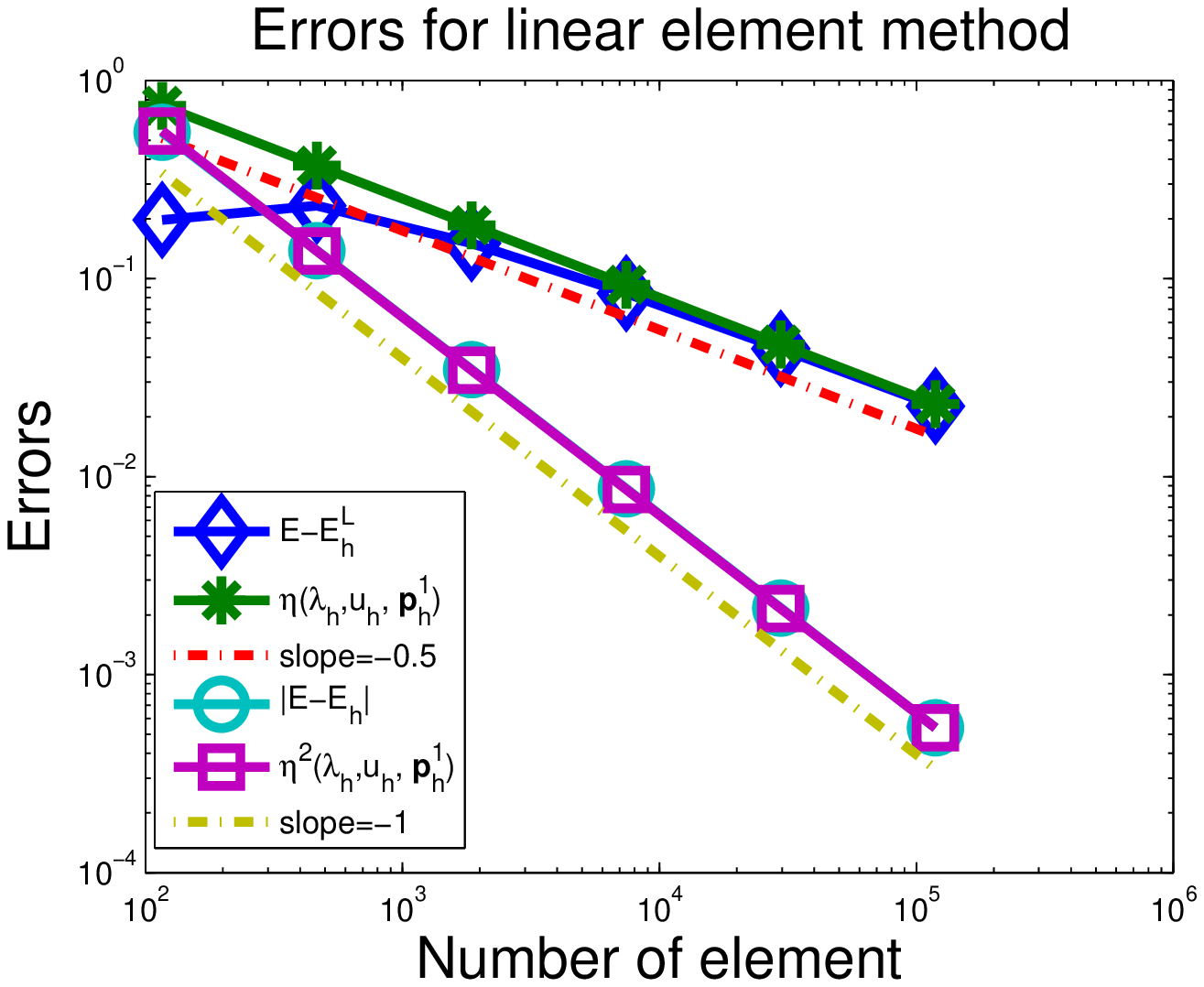}
\caption{\small\texttt The errors for the unit square domain when the eigenvalue problem is solved
by the linear finite element method, where $\eta(\lambda_h,u_h,\bfp_h^1)$ denotes the a
posteriori error estimates $\eta(\lambda_h,u_h,\bfp_h^*)$ when the dual problem is solved in $\bfW_h^1$,
and $\lambda_h^L$ denotes the asymptotic lower bound of the first eigenvalue
 and $E_h^L$ denotes the asymptotic lower bound of the ground state energy.}
\label{unit_square_eigenpair}
\end{figure}

\begin{example}
In this example, we solve the ground state solution of GPE (\ref{GPEsymply})
for BEC with $\zeta = 1$, $W(x) = x_1^2+x_2^2$ on the L shape domain
$\Omega = (-1, 1)\times(-1, 1)/[0, 1)\times(-1, 0]$.
\end{example}

Since $\Omega$ has a re-entrant corner, the singularity
of the first eigenfunction is expected. The convergence order for the eigenvalue
approximation is less than $2$ by the linear finite element method which is the order
predicted by the theory for regular eigenfunctions. Since the exact eigenvalue is not known, we choose
an adequately accurate approximation obtained by the higher order finite element and finer mesh
as the exact first eigenpair for the numerical tests. In order
to treat the singularity of the eigenfunction, we solve the GPE (\ref{GPEweakform})
by the adaptive finite element method (cf. \cite{BrennerScott,ChenHeZhou}).

We present this example to validate the results in this paper also hold on the
adaptive meshes. In order to use the adaptive finite element method, we define the
a posteriori error estimator as follows: Define the element residual $\mathcal{R}_K(\lambda_h,u_h)$ and
the jump residual $\mathcal{J}_E(u_h)$ as follows (see e.g., \cite{ChenHeZhou}):
\begin{eqnarray*}
\mathcal{R}_K(\lambda_h,u_h) &:=& \lambda_hu_h - \zeta|u_h|^2u_h - Wu_h + \Delta u_h\ \ \ {\rm in}\ K\in\mathcal{T}_h,\\
\mathcal{J}_E(u_h) &:=& -\nabla u_h^+\cdot\mathbf{\nu}^+ - \nabla u_h^-\cdot\mathbf{\nu}^- := [[\nabla u_h]]_E\cdot\mathbf{\nu}_E,\ \ {\rm on}\ E\in\mathcal{E}_h,
\end{eqnarray*}
where $\mathcal{E}_h$ denotes the interior edge set in the mesh $\mathcal{T}_h$, $E$ is the common side of elements $K^+$ and $K^-$ with unit outward
normals $\mathbf{\nu}^+$ and $\mathbf{\nu}^-$, respectively, and $\mathbf{\nu}_E = \mathbf{\nu}^-$.

For $K\in\mathcal{T}_h$, we define the local error indicator $\eta_h(\lambda_h,u_h,K)$ by
\begin{equation}
\eta_h^2(\lambda_h,u_h,K):= h_K^2\|\mathcal{R}_K(\lambda_h,u_h)\|_{0,K}^2 + \sum_{E\in\mathcal{E}_h,E\subset\partial K}h_E\|\mathcal{J}_E(u_h)\|_{0,E}^2.
\end{equation}

Then we define the global a posteriori error estimator $\eta_{ad}(\lambda_h, u_h)$ by
\begin{equation}
\eta_{ad}(\lambda_h,u_h):= \left(\sum_{K\in\mathcal{T}_h}\eta_h^2(\lambda_h,u_h,K)\right)^{1/2}.
\end{equation}
\begin{figure}[ht]
\centering
\includegraphics[width=7cm,height=5.2cm]{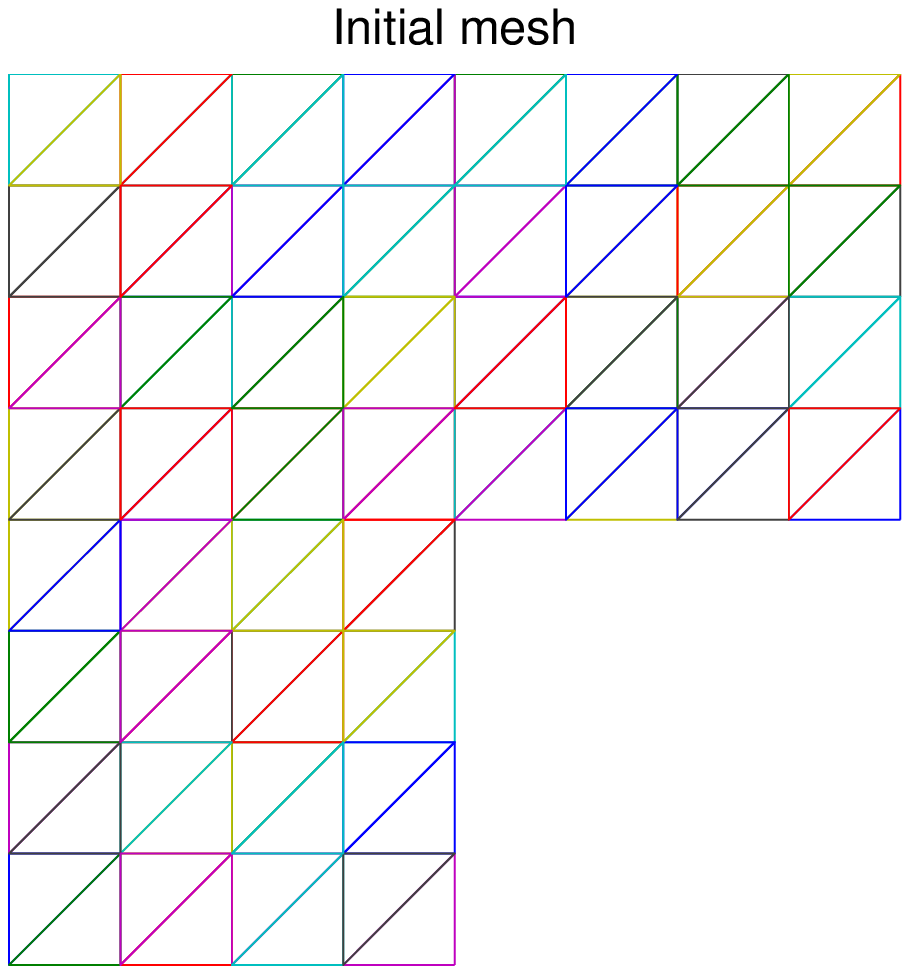}
\includegraphics[width=7cm,height=5.2cm]{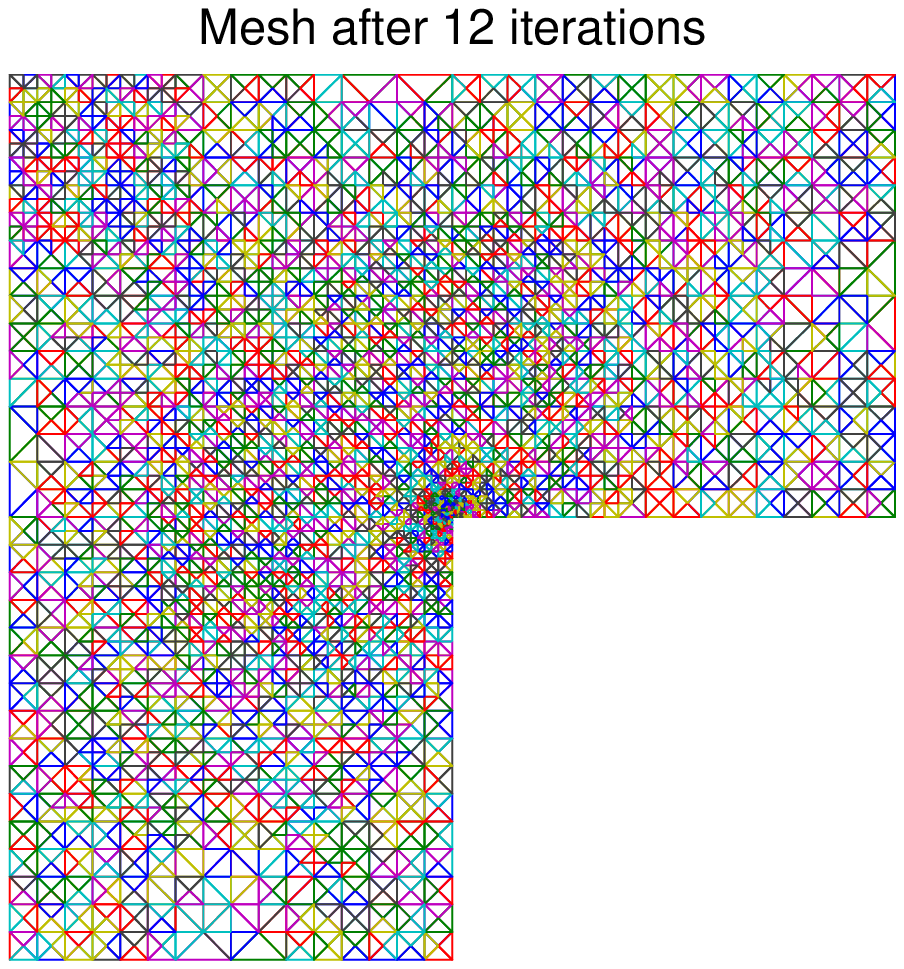}
\caption{\small\texttt The initial mesh of L-shape domain (left) and the triangulation after 12 adaptive
iterations for L-shape domain (right).}
\label{L_shape_mesh}
\end{figure}

In this example, we solve (\ref{GPEfem}) in the linear conforming finite element
space $V_h$ and solve the dual problem (\ref{Dual_Problem_fem}) in the finite element space $\bfW_h^1$
, respectively. Figure \ref{L_shape_mesh}  show the initial mesh (left) and corresponding
adaptive mesh (right) after 15 iterations. The corresponding
numerical results are presented in Figure \ref{L_shape_eigenpair} which shows that the a posteriori
error estimate $\eta(\lambda_h,u_h,\bfp_h^*)$ is also efficient even on the adaptive meshes when we
solve the dual problem in $\bfW_h^1$. Figure \ref{L_shape_eigenpair}
also shows the eigenvalue approximation $\lambda_h^L$ and ground state energy approximation
$E_h^L$ are really asymptotic lower bounds for the first eigenvalue $\lambda$ and ground state energy $E$, respectively.
\begin{figure}[ht]
\centering
\includegraphics[width=6cm,height=5.5cm]{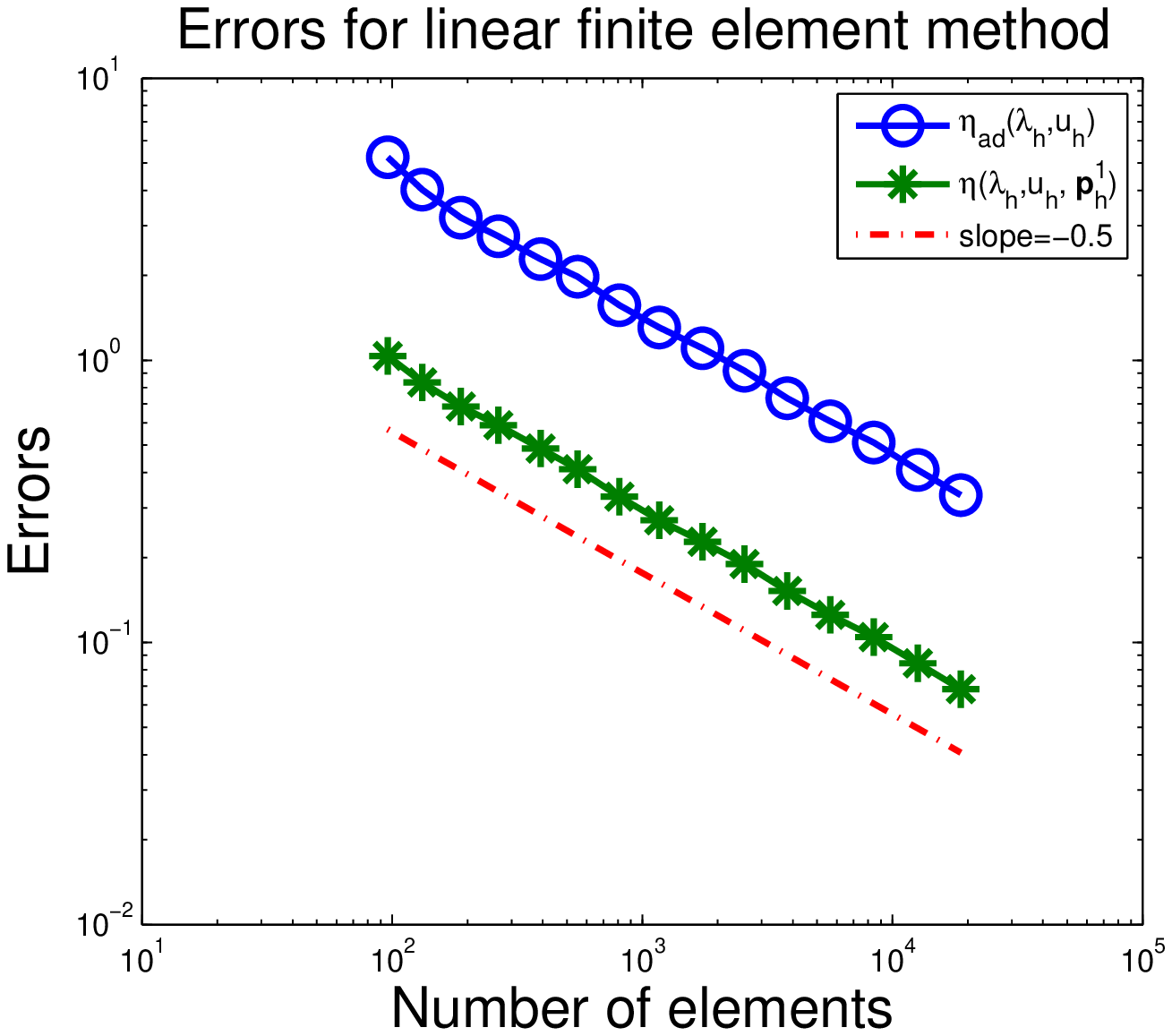}
\includegraphics[width=6cm,height=5.5cm]{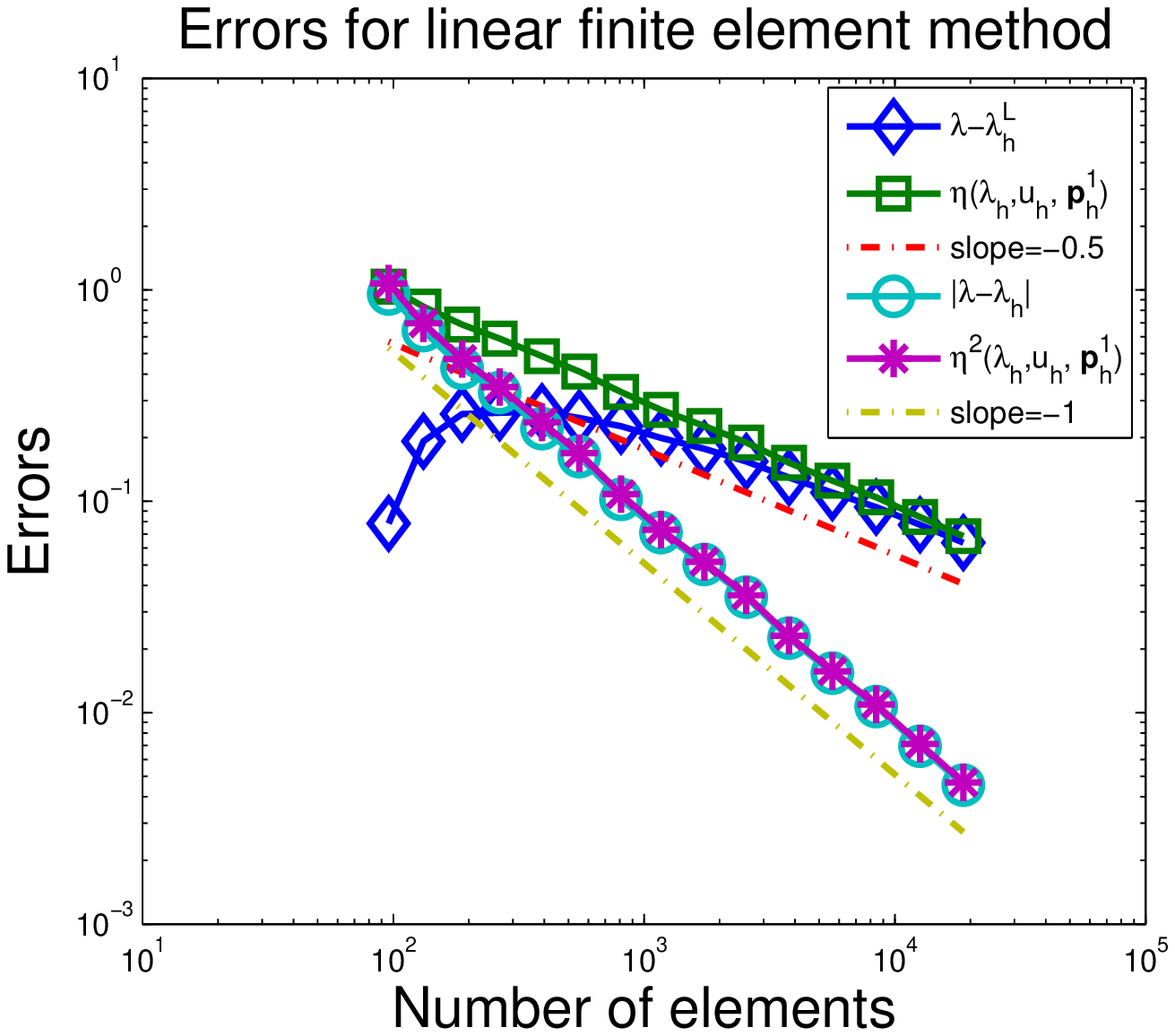}
\includegraphics[width=6cm,height=5.5cm]{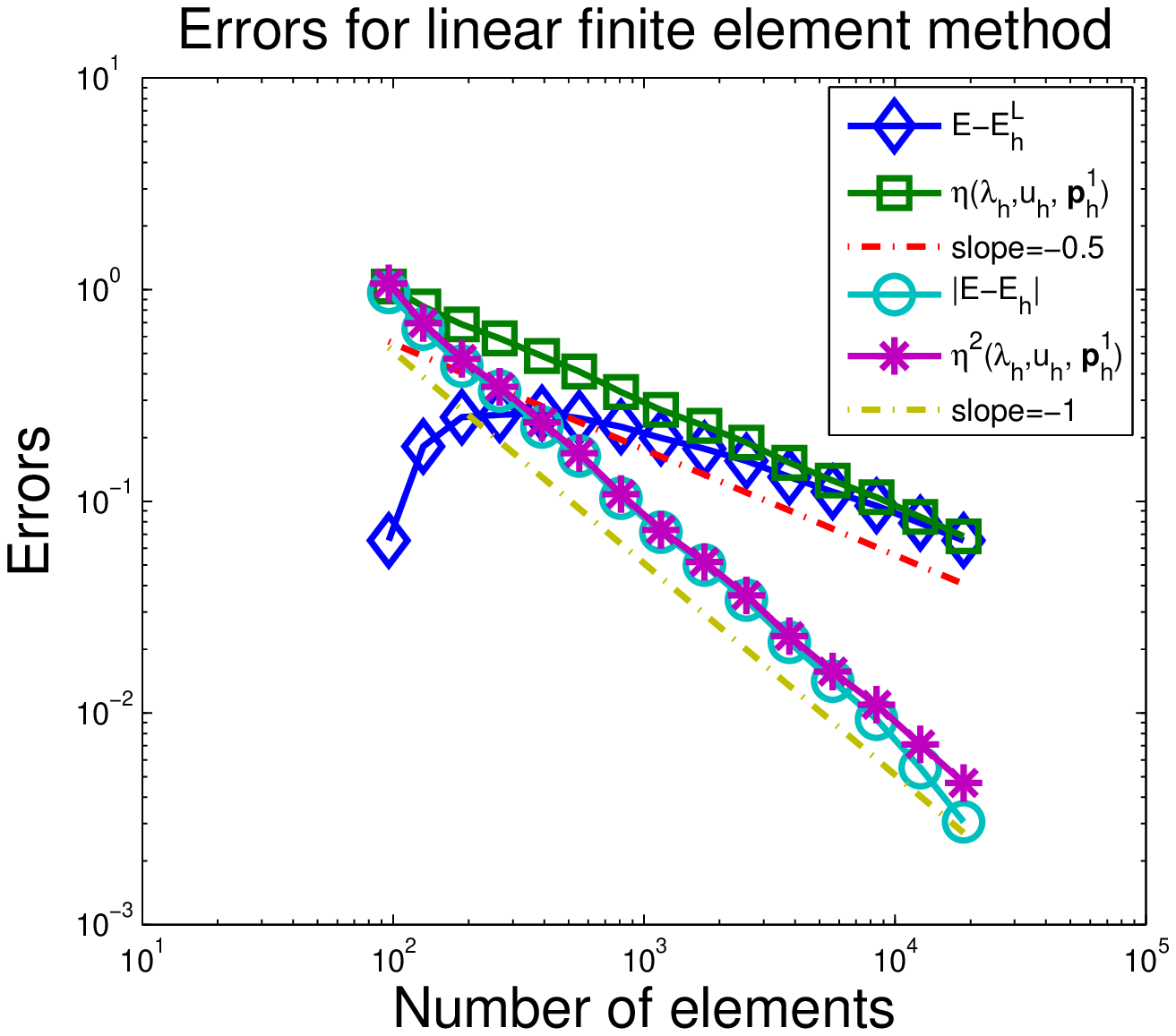}
\caption{\small\texttt The errors for the L shape domain when the eigenvalue problem is solved by the
linear finite element method, where $\eta(\lambda_h,u_h,\bfp_h^1)$ denotes the a posteriori
error estimates $\eta(\lambda_h,u_h,\bfp_h^*)$ when the dual problem is solved in $\bfW_h^1$,
and $\lambda_h^L$ denotes the asymptotic lower bounds of the first eigenvalue and $E_h^L$ denotes
the asymptotic lower bound of the ground state energy.}
\label{L_shape_eigenpair}
\end{figure}

\section{Concluding remarks}
In this paper, we give a computable error estimate for the eigenpair approximation by
the general conforming finite element methods on general meshes. Furthermore, the
asymptotic lower bound of the first eigenvalue and ground state energy can be obtained by the computable
error estimate. Some numerical examples
are provided to demonstrate the validation of the asymptotic lower
bounds for he first eigenvalue and ground state energy. The method here can be extended to many other
nonlinear eigenvalue problems, such as Kohn-Sham model for Schr\"{o}dinger equation. Moreover, we can 
adopt the efficient numerical methods to obtain these lower bound, such as multilevel correction
 and multigrid method (cf. \cite{LinXie,Xie_JCP,XieXie}). 
 
From the definitions (\ref{Asymptotic_lower_Bound}) and (\ref{Asymptotic_lower_Bound_Energy})
and numerical examples, we can find the accuracy of lower bounds $\lambda_h^L$ and $E_h^L$
is not optimal. How to produce the lower bounds with optimal accuracy will be our future work.

\end{document}